\definecolor{gray}{rgb}{0.5,0.5,0.7}
\definecolor{white}{rgb}{1,1,1}
\definecolor{black}{rgb}{0,0,0}
\definecolor{dgreen}{rgb}{0,0.65,0}
\definecolor{pink}{rgb}{0.7,0.5,0.5}
\definecolor{redred}{rgb}{1,0,0.5}
\definecolor{red}{rgb}{1,0,0}
\definecolor{rose}{rgb}{1,0.5,0.5}
\definecolor{red1}{rgb}{.8,0,0}
\definecolor{myst}{rgb}{1,0.894,0.882}
\definecolor{azure}{rgb}{0.941,1,1}
\definecolor{magenta}{rgb}{1,0,1}
\definecolor{dblue}{rgb}{0,0,0.7}
\definecolor{dcyan}{rgb}{0,0.7,0.7}
\definecolor{dmagenta}{rgb}{0.8,0,0.8}
\definecolor{fymaroyalblue}{rgb}{0.06,0.25,0.41}
\definecolor{giallo}{rgb}{0.8,0.8,0}
\definecolor{verde}{rgb}{0,0.4,0.4}
\newcommand{\N}{{\mathbb N}}
\newcommand{\R}{{\mathbb R}}
\newcommand{\F}{{\mathcal F}}
\newcommand{\vertiii}[1]{{\left\vert\kern-0.25ex\left\vert\kern-0.25ex\left\vert #1
		\right\vert\kern-0.25ex\right\vert\kern-0.25ex\right\vert}}
\def\0  {\mathbb{O}}
\def\A  {\mathcal{A}}
\def\R  {\mathbb{R}}
\def\and {\quad \text{and} \quad}
\def\F  {{\mathcal F}}
\newcommand{\e}{\quad \text{and} \quad}
\theoremstyle{definition}
\newtheorem*{problemPa}{Problem $\boldsymbol{P_\alpha}$}
\newtheorem*{P2'}{Problem $\boldsymbol{2'}$}
\newtheorem{theorem}{Theorem}[section]
\newtheorem{proposition}[theorem]{Proposition}
\newtheorem{lemma}[theorem]{Lemma}
\newtheorem{definition}[theorem]{Definition}
\newtheorem{remark}[theorem]{Remark}
\numberwithin{equation}{section}
\numberwithin{theorem}{section}
\definecolor{giallo}{rgb}{1,0.7,0}
\definecolor{verdone}{rgb}{0,0.5,0.2}
\definecolor{arancione}{rgb}{1,0.35,0}
\newcommand{\Dta}{\partial_t^\alpha}
\newcommand{\de}{\mathrm {d}}
\date{\today\\
	{{\it 2010 Mathematics Subject Classification}
		Primary: 35R30, 35R11; Secondary: 35K20, 47D06, 47B25.}
	{{\it Keywords and phrases}: identification problems, fractional time derivatives, linear evolution equations in Hilbert spaces, anisotropic diffusion, well-posedness results.}
	}
\title[]{\bf Identification problems for anisotropic time-fractional subdiffusion equations}
\begin{document}

	\maketitle
	

    	\centerline{\scshape Simone Creo and Maria Rosaria Lancia}
	{\footnotesize
		 \centerline{Department of Basic and Applied Sciences for Engineering, Sapienza Universit\`a di Roma}
		  \centerline{Via A. Scarpa 10, 00161 Roma, Italy}
        \centerline{ORCID IDs: 0000-0002-2083-2344, 0000-0002-6384-8311}
	}
	\smallskip

	\centerline{\scshape Andrea Mola}
	{\footnotesize
		 \centerline{Multi-scale Analysis of Materials Research Unit, IMT School for Advanced Studies Lucca}
		  \centerline{Piazza S. Francesco, 19, 55100 Lucca, Italy}
        \centerline{ORCID ID: 0000-0002-4483-0212}
	}
    	\smallskip

	\centerline{\scshape Gianluca Mola}
	
	{\footnotesize
		 \centerline{Department of Science and Engeneering, Sorbonne University at Abu Dhabi}
		  \centerline{Al Reem Island, 38044 Abu Dhabi, UAE}
          \centerline{ORCID ID: 0000-0003-1585-372X}
	}
\smallskip
    	\centerline{\scshape Silvia Romanelli}
	
	{\footnotesize
		 \centerline{Dipartimento di Matematica, Universit\`a  degli Studi di Bari Aldo Moro}
		  \centerline{Via E. Orabona 4, 70125 Bari, Italy}
        \centerline{ORCID ID: 0000-0001-5408-042X}
	}
	

	

	\begin{abstract}
		We investigate the inverse problem consisting in the identification of constant coefficients for a fractional-in-time partial differential equation governed by a finite sum of positive self-adjoint operators on a Hilbert space under energy-type overdeterminating conditions. We prove the uniqueness of the solution to the inverse problem when the fractional order $\alpha$ of the derivative is in $(0,1)$. A conditioned existence result is also provided, complemented with a suitable selection of numerical calculations. In addition, we prove that, as $\alpha\to 1^{-}$, the solution corresponding to $\alpha$ tends to the classical one ($\alpha=1$). Applications to examples of heat diffusion and elasticity are presented.
	\end{abstract}


		\section{Introduction}\label{introduction}

We consider the fractional-in-time equation
\begin{equation}\label{1}
	\partial_t^{\alpha} u(t) + \lambda_1 A_1\, u(t) + \cdots + \lambda_n A_n\, u(t) = 0, \quad t > 0,
\end{equation}
which describes the evolution of a concentration variable $u = u(t)$ on a Hilbert space $H$, where $\partial_t^{\alpha}$ denotes the Caputo-type  derivative of order $\alpha \in (0,1)$ (see Subsection \ref{fractional} for the definition). 
Under positivity conditions on the linear (possibly unbounded) operators $A_i: D(A_i) \to H$, the equation \eqref{1} describes models of subdiffusion (cf. \cite[Appendix C.2]{GW}), which include applications to conductors with fading memory. The sum of multiple operators allows to display effects related to \emph{anisotropic} subdiffusion along the $n$ directions implicitly delivered by $A_1, \dots, A_n$, each one weighted by its coefficient $\lambda_i$, for $i=1,\dots,n$, assumed from now on to be a positive constant.

In the limiting value $\alpha = 1$, the equation \eqref{1} accounts for the original diffusion equation 
\begin{equation}\label{2}
	\partial_t u(t) + \lambda_1 A_1\, u(t) + \cdots + \lambda_n A_n\,u(t) = 0, \quad t > 0,
\end{equation}
induced by the celebrated Fourier law in the framework of thermal transmission and then exported, among others, to chemistry, metallurgy, and dynamic of populations (see \cite{Ghez}).

In the case where the coefficients $\lambda_1, \dots, \lambda_n$ are known, well-posedness results for the aforementioned problem \eqref{1} can be achieved. This is the so-called \emph{direct problem}.

In particular, if $\alpha = 1$, existence and uniqueness of a solution $u(t)$ originating from an initial datum $u(0) = u_0$ in $H$ can be proven using the semigroup of operator approach which may also cover non-autonomous equations featuring source terms (see, e.g., \cite{Kato66} and \cite{Tanabe}).\\
As concerning the fractional case $\alpha \in (0,1)$, the literature on direct problems is huge; among the others, we refer to \cite{GW}, \cite{bazthesis}, \cite{Dieth}, \cite{kubyam} and the references listed there. Moreover, convergence results of the solution of \eqref{1} to the one of its singular limit \eqref{2} as $\alpha \to 1^{-}$ (in the case of a single operator) have been proved in \cite{Carvalho}.

\smallskip

 However, measurements of the quantities $\lambda_1, \dots, \lambda_n$ are not always available, and it is worthwhile mentioning that experimental approaches aimed to the determination of diffusion coefficients in different types of materials have been widely developed (see, for instance, \cite[Chapter 10]{Crank}, \cite{HdA} and the references therein).

Accordingly, the main goal of this paper is the investigation  of the \emph{inverse problem} consisting in the identification of the constants $\lambda_1, \dots, \lambda_n$, supposed to be unknown, along with $u$, under the overdeterminating conditions given  by the energy functionals
\begin{equation}
	\nonumber \langle  A_1 u(\bar T), u(\bar T) \rangle = \varphi_1, \quad \cdots \quad , \langle  A_n u(\bar T), u(\bar T) \rangle  = \varphi_n.
\end{equation}
Here $u(\bar T)$ is the solution-to-be $u(t)$ at a prescribed time-instant $t = \bar T$ and $\langle \cdot, \cdot\rangle$ is the scalar product in $H$. The relevant question to be addressed is whether the information provided by the $n$ measurements $\varphi_i$ is enough in order to uniquely reconstruct the $n$ coefficients $\lambda_i$. Thus, our main result concerns the uniqueness of the unknown $(u,\lambda_1, \dots, \lambda_n)$ in the fractional case $\alpha \in (0,1)$, provided that the initial datum $u_0$ and the additional measurements $(\varphi_1, \dots, \varphi_n)$ fulfill suitable assumptions (cf. Subsection \ref{icp}). Note that a conditioned existence result is provided, complemented with suitable numerical calculations which, to the best of our knowledge, appear to be new in the literature.

We also investigate the convergence of the solution $(u,\lambda_1, \cdots, \lambda_n)$ of the fractional equation \eqref{1} to the one of the diffusion equation \eqref{2} as $\alpha \to 1^{-}$. We remark that the well-posedness of the inverse problem in the case $\alpha = 1$ has already been studied in \cite{Mola1}. 


Applications of the abstract results mentioned above are provided. In particular, the problem of recovering diffusion coefficients in a fractional  heat equation is analyzed on bounded domains. Furthermore, we also apply our results to the identification of the Lam\'e constants of a plate in an elasticity fractional equation under the structural assumption that the inertial forces are neglected. 

\smallskip

As concerning the state of the art, the literature on inverse problems for fractional-in-time heat-type equations is not vast. As far as we know, all the results for inverse problems for fractional-in-time evolution equations deal with only one operator $A$, possibly non-autonomous, and identify the source term $F(t)$ and/or the time derivative order $\alpha$.
Very recently in \cite{FloGYam}, the authors study a fractional-in-time evolution equation in a Banach space.
After proving well-posedness results for the direct problem, they apply them to an inverse problem of determining an initial value and establishing the uniqueness.\\
Additional results concerning inverse source and backward problems associated with the Caputo fractional time derivative and the fractional Laplacian operator in space can be found in \cite{QiaoXiongHan}.
Moreover, the determination of the fractional order $\alpha$ and the diffusion coefficient from boundary data are treated in \cite{CNYY} in the one-dimensional case.
Other one-dimensional inverse problems for fractional-in-time evolution equations are considered in \cite{JingJiaSong} and \cite{Durdiev}.\\
Direct and inverse problems for the nonhomogeneous time-fractional heat equation with a time-dependent diffusion coefficient for a positive operator are considered in \cite{SerRuzTok} under linear overdeterminating conditions.\\
A numerical procedure for recovering a space-dependent diffusion coefficient for fractional evolution equations on the whole $\R^n$ using Fourier transforms is studied in \cite{Jin-Lu-Quan-Zhou}.\\
Taking into account the results mentioned above, our abstract approach allows us to consider a wide class of problems, and our inverse problem (which is intrinsically nonlinear) differs (to the best of our knowledge) from most of the ones in the literature.


\medskip

We conclude by mentioning possible next directions of our investigation. First, they include the well-posedness of the problem presented in this paper by providing continuous dependence estimates with respect to the initial data, plus possible perturbation theory for the operators $A_1, \cdots, A_n$ using Mosco convergence. This would allow us to apply our abstract results to dynamics on fractal sets, thus extending the theory recently developed in \cite{CLMR}.
	
	 \noindent A further aim could be the analysis of the non-autonomous case, focusing on the identification of time-dependent diffusion coefficients $(\lambda_1(t), \cdots, \lambda_n(t))$ along with the overdeterminating conditions of the same type as above on the whole interval $[0,\bar T]$. However, such a problem demands an extension of the techniques displayed in this paper to infinite-dimensional spaces, thus covering the results devised in the diagonal case in \cite{FMR}.

	\medskip
	
    The paper is organized as follows.\\
    Section \ref{preliminaries} contains the functional setting and the notion of Caputo-type fractional derivative.\\
    In Section \ref{abstract} we present the abstract inverse problem object of our study and we state the main results.\\
    Section \ref{main1} is devoted to the proof of  existence and uniqueness results for the solution of the inverse problem with fractional time derivative.\\
    Section \ref{Numerical simulations} contains some numerical simulations which are complementary to the existence result. A deeper discussion can be found in Appendix \ref{numerical}.\\
    In Section \ref{main2} we prove that the solution of the inverse problem \eqref{1} converges to the solution of the inverse problem \eqref{2} as $\alpha\to 1^-$.\\
    Some applications are presented in  Section \ref{applications}.\\

	

	\bigskip

	\section{Preliminaries}\label{preliminaries}
	
	\subsection{Notation and functional setting}\label{notazioni}
	
	Let $H$ be a (real) separable Hilbert space endowed with an inner product $\langle\cdot,\cdot\rangle$ and its related norm $\|\cdot\|.$ We will use the notation $\R_+=(0,\infty)$, $\R_+^{n}=(0,\infty)^n$ and $\overline{\R}_+=[0,\infty)$. 
	
	The basic algebraic facts related to Gramian matrices are well known, but for the reader's convenience, we recall the following result.
	
	\begin{proposition}\label{Gram}
		Let $y_{1},\dots,y_{n} \in H$  and consider the related $(n \times n)$-Gram matrix
		$$
		G = \left[ \langle y_{i},y_{j} \rangle \right]_{i,j = 1, \cdots,n}.
		$$
		Then $G$ is positive defined if and only if the vectors $\{y_{1},\cdots,y_{n}\}$ are linearly independent.
	\end{proposition}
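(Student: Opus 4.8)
The plan is to reduce both conditions to a single algebraic identity relating the quadratic form attached to $G$ to the squared norm of a linear combination of the $y_i$. For a column vector $c=(c_1,\dots,c_n)^{\mathsf T}\in\R^n$, I would expand
$$
c^{\mathsf T} G\, c=\sum_{i,j=1}^{n} c_i c_j\langle y_i,y_j\rangle=\Big\langle \sum_{i=1}^{n} c_i y_i,\ \sum_{j=1}^{n} c_j y_j\Big\rangle=\Big\|\sum_{i=1}^{n} c_i y_i\Big\|^2,
$$
using only the bilinearity and symmetry of the inner product on the real Hilbert space $H$. This identity shows at once that $G$ is symmetric and positive semidefinite, so that the entire question reduces to deciding when this quadratic form vanishes on a nonzero vector.

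For the equivalence itself I would argue both implications directly from this identity. If $\{y_1,\dots,y_n\}$ are linearly independent, then $\sum_i c_i y_i=0$ forces $c=0$; hence any $c\neq 0$ yields $\|\sum_i c_i y_i\|^2>0$, i.e. $c^{\mathsf T}Gc>0$, so $G$ is positive definite. Conversely, if the vectors are linearly dependent there exists some $c\neq 0$ with $\sum_i c_i y_i=0$, whence $c^{\mathsf T}Gc=0$ and $G$ fails to be positive definite. Reading each implication together (or taking contrapositives) gives the stated ``if and only if''.

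I do not expect any genuine obstacle here: the result is essentially the observation that $c\mapsto c^{\mathsf T}Gc$ and $c\mapsto\|\sum_i c_iy_i\|^2$ are literally the same quadratic form on $\R^n$. The only points deserving a word of care are that the inner product is symmetric (so that the cross terms combine correctly and $G$ is a real symmetric matrix, to which the notion of positive definiteness applies in the usual sense), and the standard fact that a real symmetric matrix is positive definite precisely when its associated quadratic form is strictly positive away from the origin; both are immediate in the present setting.
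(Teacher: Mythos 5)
Your proof is correct and complete: the identity $c^{\mathsf T}Gc=\bigl\|\sum_i c_i y_i\bigr\|^2$ is exactly the standard argument, and both implications follow cleanly from it. The paper itself states this proposition without proof (it is recalled as a well-known algebraic fact), so there is nothing to compare against; your argument is the canonical one and fills that gap appropriately.
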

	
	Let $A$ be a non-negative self-adjoint operator acting on $H$ with dense domain $D(A)$ and range $R(A)$, i.e., $A = A^{\ast}$ and $\langle x,Ax\rangle \geq 0$, for $x \in D(A)$.

	\smallskip
	
	Recall that, see, e.g., \cite{Kato66}, 
	the square root of $A$ is given by
	\begin{equation*}
		A^{1/2}x = \frac{1}{\pi}\int_{0}^{\infty}
		\xi^{-1/2}(A+\xi)^{-1}Ax\,\de\xi,
		\quad x \in D(A),
	\end{equation*}
	and displays the  property
	\begin{align*}
		\langle A x,x \rangle = \|A^{1/2}x\|^2 \leq \|x\|\|Ax\|,
		\quad x \in D(A).
	\end{align*}
	
	\smallskip

In the following, we assume that $A:D(A) \to H$ is a closed and self-adjoint (possibly unbounded) operator. We shall also require $A$ to be strictly positive, that is, $\langle Au,u\rangle \geq \Lambda \|u\|^2$ for some $\Lambda >0$ (\emph{coercivity constant}) and for every $u \in D(A)$. As is well known, under the above assumptions the operator $-A$ generates an analytic semigroup of contractions on $H$ denoted by
$\{e^{- tA}\}_{t \ge 0}$ (cf. \cite[Chapter 7]{Brezis2}). 

\smallskip

Here we subsume the main properties of the semigroup family that we shall use in the following.
\begin{proposition}\label{semigroup prop}
Let $\{e^{- tA}\}_{t \ge 0}$ be the semigroup generated by $-A$ on $H$. Then

\smallskip

\begin{itemize}
\item[(1)] for every $u_0 \in H$ the function $u(t) = e^{- tA}u_0$, for $t >0$, is the unique solution to the Cauchy problem
\begin{equation*}
	\begin{cases}
		\partial_tu(t) + Au(t) = 0, \quad t > 0,\\[2mm]
		u(0)=u_0,
	\end{cases}
\end{equation*}
with regularity
$$
u \in C(\overline{\R}_{+};H) \cap C^{1}(\R_{+};H) \cap C\left(\R_{+}; D(A)\right);
$$

\smallskip

\item[(2)] the identity 
$$
\dfrac{\partial}{\partial t}(e^{-t A}u_0) = -Ae^{- t A}u_0
$$
holds for all $t>0$ and all $u_0\in H$;

\smallskip

\item[(3)] if $\gamma > 0$ denotes the smallest eigenvalue of $A$, then the decay estimate
$$
\| e^{-t A} \|_{\mathcal{L}(H)} \leq e^{-\gamma t}
$$
holds for all $t\geq 0$, where $\| \cdot \|_{\mathcal{L}(H)}$ denotes the norm in the space $\mathcal{L}(H)$ of linear bounded functionals on $H$;

\smallskip

\item[(4)] for all $t,s \in \overline{\R}_{+}$ it holds
$$
e^{-t A}e^{-s A} = e^{-(t+s) A} \quad (\text{\emph{semigroup identity}}).
$$

\smallskip

\item[(5)] For all $t \in \overline{\R}_{+}$ the operator $e^{-t A}$ is self-adjoint and injective in $H$.
\end{itemize}
\end{proposition}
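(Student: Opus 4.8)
The unifying tool for all five assertions is the spectral theorem for the unbounded self-adjoint operator $A$. Since $A$ is self-adjoint and coercive, its spectrum is real and bounded below by the coercivity constant, so that $\sigma(A) \subseteq [\gamma, \infty)$ with $\gamma \geq \Lambda > 0$, where $\gamma = \min \sigma(A)$ is the smallest eigenvalue figuring in item (3). Writing $\{E(\mu)\}$ for the projection-valued spectral measure of $A$, the functional calculus lets me define
\begin{equation*}
	e^{-tA} = \int_{\gamma}^{\infty} e^{-t\mu}\, \de E(\mu), \qquad t \geq 0,
\end{equation*}
so that $\|e^{-tA}u\|^2 = \int_{\gamma}^{\infty} e^{-2t\mu}\, \de \|E(\mu)u\|^2$ for every $u \in H$. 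The plan is to reduce each property to an elementary pointwise statement about the scalar functions $\phi_t(\mu) = e^{-t\mu}$ on the interval $[\gamma, \infty)$, and then lift it through this calculus.

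The algebraic and quantitative items come essentially for free. For (4) the semigroup law is the pointwise identity $\phi_t(\mu)\phi_s(\mu) = \phi_{t+s}(\mu)$, which the calculus turns into $e^{-tA}e^{-sA} = e^{-(t+s)A}$. For (5), self-adjointness follows because $\phi_t$ is real-valued, so $e^{-tA}$ is a bounded self-adjoint operator; injectivity follows because $\phi_t(\mu) > 0$ for every $\mu \geq \gamma$, whence $e^{-tA}u = 0$ forces $\int_{\gamma}^{\infty} e^{-2t\mu}\,\de\|E(\mu)u\|^2 = 0$ and thus $u = 0$. For (3) the bound is $\sup_{\mu \geq \gamma} e^{-t\mu} = e^{-t\gamma}$, which gives $\|e^{-tA}u\|^2 \leq e^{-2t\gamma}\|u\|^2$ and hence the claimed operator-norm decay.

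The heart of the argument is item (2), which carries the \emph{smoothing} effect typical of analytic semigroups. The key observation is that, for each fixed $t > 0$, the function $\mu \mapsto \mu\, e^{-t\mu}$ is bounded on $[\gamma, \infty)$; consequently $e^{-tA}u_0 \in D(A)$ for \emph{every} $u_0 \in H$, even when $u_0 \notin D(A)$, and $A e^{-tA}$ is a bounded operator. Differentiating the scalar identity $\tfrac{\de}{\de t} e^{-t\mu} = -\mu\, e^{-t\mu}$ and justifying the interchange of derivative and spectral integral by dominated convergence (the difference quotients are dominated, locally uniformly in $t > 0$, using the boundedness of $\mu^2 e^{-t\mu}$) yields $\tfrac{\partial}{\partial t}(e^{-tA}u_0) = -A e^{-tA}u_0$ for all $t > 0$ and all $u_0 \in H$.

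Finally, item (1) is assembled from the previous steps. That $u(t) = e^{-tA}u_0$ solves the Cauchy problem on $\R_+$ is exactly (2), and the mapping properties of $\phi_t$ give the regularity: strong continuity up to $t = 0$ follows from $\phi_t(\mu) \to 1$ as $t \to 0^+$ together with dominated convergence, giving $u \in C(\overline{\R}_+;H)$, while the $C^{1}(\R_+;H)$ and $C(\R_+;D(A))$ regularity follow from the boundedness and continuity in $t$ of the operators $A^{k} e^{-tA}$ for $t > 0$. Uniqueness is the one point where I would argue directly rather than spectrally: given two solutions, their difference $w$ satisfies $w(0) = 0$ and $\tfrac{\de}{\de t}\|w(t)\|^2 = 2\langle \partial_t w, w\rangle = -2\langle A w, w\rangle \leq 0$ by coercivity, so $\|w(t)\|^2 \leq \|w(0)\|^2 = 0$ for all $t$. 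I expect the only genuinely delicate point to be the strong continuity at $t = 0$ for arbitrary $u_0 \in H$ (as opposed to $u_0 \in D(A)$), which is precisely what the dominated-convergence argument through the spectral integral is designed to handle.
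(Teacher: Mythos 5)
Your proof is correct. Note first that the paper does not actually prove this proposition: it is presented as a summary of classical facts, with a pointer to the treatment of self-adjoint generators in \cite{Brezis2}, where the standard route is the Hille--Yosida theory combined with the regularizing effect of analytic semigroups (or, in Brezis's self-adjoint setting, a Galerkin/eigenfunction argument). Your route through the spectral theorem and the functional calculus $e^{-tA}=\int_{\gamma}^{\infty}e^{-t\mu}\,\de E(\mu)$ is a genuinely self-contained alternative that fully exploits the self-adjointness hypothesis: it reduces (2)--(5) to elementary facts about the scalar functions $\mu\mapsto e^{-t\mu}$ and $\mu\mapsto\mu^{k}e^{-t\mu}$ on $[\gamma,\infty)$, and it makes the smoothing property $e^{-tA}H\subseteq D(A)$ for $t>0$ completely transparent, whereas the semigroup-theoretic route has to invoke analyticity to get the same conclusion. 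The price is that your argument is tied to self-adjoint $A$ and would not generalize to non-normal generators, while the cited theory would. Two small points worth tightening: the bottom of the spectrum of an unbounded self-adjoint operator need not be an eigenvalue in general (the paper's phrasing ``smallest eigenvalue'' implicitly assumes discrete spectrum, as in the applications), but your estimate only uses $\inf\sigma(A)=\gamma\geq\Lambda>0$, so nothing breaks; and in the uniqueness step the differential inequality $\frac{\de}{\de t}\|w(t)\|^{2}\leq 0$ is available only for $t>0$, so you need the continuity of $w$ at $t=0$ (which is part of the stated regularity class) to pass from $\|w(t)\|^{2}\leq\|w(s)\|^{2}$ for $0<s<t$ to $\|w(t)\|^{2}\leq\|w(0)\|^{2}=0$. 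Neither point is a gap; both are easily absorbed into what you wrote.
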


\medskip


\subsection{Time fractional derivatives}\label{fractional}

Here we recall some main facts concerning time-fractional derivatives. Following the notation introduced in \cite{GW}, for all $\alpha \in (0,1)$ we set
\begin{equation*}
	g_{\alpha}(t)
	=
	\begin{cases}
		\dfrac{t^{-1+\alpha}}{\Gamma(\alpha)}, &  t >0,\\[3mm]
		0,  & t \leq 0,
	\end{cases}
\end{equation*}
where $\Gamma$ denotes the usual Gamma function. 

\begin{definition}\label{2.1.1}
Let $Y$ be a Banach space, $T>0$ and let $f\in C([0,T];Y)$ be such that $g_{1-\alpha}\ast f \in W^{1,1} ((0, T); Y)$.
\begin{itemize}
	\item[i)] The \emph{Riemann-Liouville} fractional derivative of order $\alpha \in  (0,1)$  is defined as follows:
$$D^\alpha_t f(t):=\frac{\de}{\de t}(g_{1-\alpha}\ast f)(t)=\frac{\de}{\de t}\int_0^t g_{1-\alpha}(t-\tau) f(\tau)\,\de\tau,$$
for a.e. $t \in (0, T ]$.
	\item[ii)] The \emph{Caputo-type} fractional derivative of order $\alpha \in  (0,1)$ is defined as follows:
$$\partial^\alpha_t  f(t):= D^\alpha_t (f(t)-f (0)),$$
for a.e. $t \in (0,T]$.
\end{itemize}
\end{definition}


We stress the fact that Definition \ref{2.1.1}-$ii)$ gives a weaker definition of (Caputo) fractional derivative with respect to the original one (see \cite{CAPUTO}), since $f$ is not assumed to be differentiable. Moreover, it holds that $\partial^\alpha_t  (c) = 0$ for every constant $c\in\R$.\\
We refer to \cite{Dieth} for further details on fractional derivatives.

\medskip

Now, under the assumptions introduced in the Subsection \ref{notazioni}, we consider $u_0 \in H$ and $f: (0,T) \to H$ and we introduce the (nonhomogeneous) linear fractional Cauchy problem
\begin{equation}\label{fcp}
	\begin{cases}
		\partial_t^{\alpha} u(t) + Au(t) = f(t), & t \in (0,T),\\[3mm]
		u(0) = u_0.
	\end{cases}
\end{equation}
Next, we define a proper notion of solution for the problem introduced above (cf. \cite[Definition 2.1.4]{GW}).
\begin{definition}\label{strongsol}
	We say that $u \in C\left([0,T);H\right)$ is a strong solution for \eqref{fcp} in the interval $[0,T]$ if $u(0) = u_0$ and, for any $T_1, T_2\in (0,T)$, $T_1\le T_2$,
	\begin{itemize}
		\item[\emph{i)}] $u(t) \in D(A)$ for all $t \in [T_1,T_2]$
		\smallskip
		\item[\emph{ii)}] $\partial_t^{\alpha} u \in C\left([T_1,T_2];H\right)$;
		\smallskip
		\item[\emph{iii)}] the differential equation $\partial_t^{\alpha} u(t) + Au(t) = f(t)$ is fulfilled on $[T_1,T_2]$.
	\end{itemize}
\end{definition}

The following statement \cite[Theorem 2.1.7]{GW} concerns the well-posedness for \eqref{fcp}, and displays a useful representation formula for its solution. We first recall the  definition of the Wright type function (see \cite[Formula (28)]{GLM}):
\begin{equation}\notag
\Phi_\alpha(z):=\sum_{n=0}^\infty\frac{(-z)^n}{n!\Gamma(-\alpha n+1-\alpha)},\quad 0<\alpha<1,\,z\in \mathbb{C}.
\end{equation}

From \cite[page 14]{bazthesis}, it holds that $\Phi_\alpha(t)$ is a probability density function, i.e.
\begin{equation}\notag
\Phi_\alpha(t)\geq 0\quad\text{if }t>0,\quad\int_0^{+\infty}\Phi_\alpha(t)\,\de t=1.
\end{equation}
In addition, we have that
$$\int_0^{\infty}t^p\Phi_{\alpha}(t)\, \,\de t=\dfrac{\Gamma (p+1)}{\Gamma(\alpha p+1)}, \quad p>-1, \quad 0<\alpha<1,$$
(see, e.g., \cite{GLM}).

\noindent For more properties about the Wright function, among the others we refer to \cite{bazthesis}, \cite{GLM}, \cite{wright}.

According to \cite[Chapter 2]{GW}), we recall the following result.

\begin{theorem}\label{formula}
	Let $f: (0,T) \to H$ fulfill the following assumptions:
	\begin{itemize}
		\item[(a)] $f \in C^{0,\beta}\left((0,T);H\right)$ for some $\beta > 0$;
		\item[(b)] there exists $r  > 1/\alpha$ such that $\int_{0}^{T_0}\|f(t)\|^r\,\de t < \infty$ for some $T_0>0$.
	\end{itemize}
	
	Then, there exists a unique strong solution of \eqref{fcp} (in the sense of Definition \ref{strongsol}), given by  
	$$
	u(t) 
	= 
	S_{\alpha}(t)u_0
	+
	\int_{0}^{t}P_{\alpha}(t-s)f(s)\,\de s, \quad t \in (0,T),
	$$
	where, for $t \geq 0$ and $x\in H$, we define the (bounded) operators 
	$$
	S_{\alpha}(t) x
	=
	\displaystyle \int_0^\infty\Phi_\alpha(\tau)e^{-\tau t^\alpha A}x\,\de\tau
	\quad
	\text{and}
	\quad
	P_{\alpha}(t) x
	=
	\alpha t^{-1+\alpha}\displaystyle\int_0^\infty\tau\Phi_\alpha(\tau)e^{-\tau t^\alpha A}x\,\de\tau.
	$$
	Moreover, the following estimates hold:
	$$
	\|S_{\alpha}(t) x\| \leq \|x\|
	\quad
	\text{and}
	\quad
	\|P_{\alpha}(t) x\| \leq \alpha \dfrac{\Gamma(2)}{\Gamma(1+\alpha)}\|x\|t^{-1+\alpha},
	\quad
	t > 0,
	\quad x \in H.
	$$
\end{theorem}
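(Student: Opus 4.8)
The plan is to construct the solution by the subordination principle, deriving it from the analytic contraction semigroup $\{e^{-tA}\}$ through the Wright function, and then to check that the resulting formula meets Definition \ref{strongsol}. I would begin with the two operator estimates, since these need no regularity of $f$. By Proposition~\ref{semigroup prop}(3) the semigroup is contractive, so $\|e^{-\tau t^\alpha A}x\|\le\|x\|$ for all $\tau,t\ge 0$. Moving the norm inside the integrals defining $S_\alpha(t)$ and $P_\alpha(t)$ and using $\Phi_\alpha\ge 0$ together with $\int_0^\infty\Phi_\alpha(\tau)\,\de\tau=1$ yields $\|S_\alpha(t)x\|\le\|x\|$, while the moment identity with $p=1$, namely $\int_0^\infty\tau\Phi_\alpha(\tau)\,\de\tau=\Gamma(2)/\Gamma(1+\alpha)$, gives the stated bound for $P_\alpha(t)$. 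In particular $t\mapsto\|P_\alpha(t)\|$ has the locally integrable singularity $t^{-1+\alpha}$, so the convolution $\int_0^tP_\alpha(t-s)f(s)\,\de s$ is well defined in $H$.

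For existence I would verify the representation formula via the Laplace transform. Using the transform of the Caputo derivative, $\widehat{\partial_t^\alpha u}(\lambda)=\lambda^\alpha\hat u(\lambda)-\lambda^{\alpha-1}u_0$, problem \eqref{fcp} becomes the algebraic resolvent equation $(\lambda^\alpha I+A)\hat u(\lambda)=\lambda^{\alpha-1}u_0+\hat f(\lambda)$, whence
$$
\hat u(\lambda)=\lambda^{\alpha-1}(\lambda^\alpha I+A)^{-1}u_0+(\lambda^\alpha I+A)^{-1}\hat f(\lambda).
$$
The subordination identities $\int_0^\infty e^{-\lambda t}S_\alpha(t)\,\de t=\lambda^{\alpha-1}(\lambda^\alpha I+A)^{-1}$ and $\int_0^\infty e^{-\lambda t}P_\alpha(t)\,\de t=(\lambda^\alpha I+A)^{-1}$, which follow from the Laplace transform of the Wright function and the spectral calculus for the self-adjoint $A$, then invert (with the convolution theorem) to produce exactly $u(t)=S_\alpha(t)u_0+\int_0^tP_\alpha(t-s)f(s)\,\de s$. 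For the homogeneous part, differentiating under the integral sign and invoking Proposition~\ref{semigroup prop}(2) shows $S_\alpha(t)u_0\in D(A)$ for $t>0$ and that it solves $\partial_t^\alpha v+Av=0$ with $v(0)=u_0$.

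The main obstacle is the regularity of the Duhamel term $v(t):=\int_0^tP_\alpha(t-s)f(s)\,\de s$, and this is precisely where the two hypotheses enter. One cannot apply $A$ naively under the integral, since the analyticity estimate $\|Ae^{-sA}\|\le C s^{-1}$ gives, after integrating against $\tau\Phi_\alpha(\tau)$, only $\|AP_\alpha(\sigma)\|\le C\sigma^{-1}$, a non-integrable singularity. The remedy is the splitting $f(s)=(f(s)-f(t))+f(t)$: for the constant part I would use the resolvent-type identity $A\int_0^\sigma P_\alpha(r)x\,\de r=x-S_\alpha(\sigma)x$ (checked at once on the Laplace side), which makes that contribution equal to $(I-S_\alpha(t))f(t)$; for the increment, the H\"older bound (a) yields $\|AP_\alpha(t-s)(f(s)-f(t))\|\le C(t-s)^{\beta-1}$, integrable on $(0,t)$ because $\beta>0$. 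This establishes $v(t)\in D(A)$ and, by a parallel estimate, that $\partial_t^\alpha v$ is continuous on each $[T_1,T_2]\subset(0,T)$, giving Definition~\ref{strongsol}(i)–(iii). Hypothesis (b) is what secures $v\in C([0,T);H)$ with $v(0)=0$: by H\"older's inequality $\|v(t)\|\le C\int_0^t(t-s)^{-1+\alpha}\|f(s)\|\,\de s$, and the requirement $r>1/\alpha$ is exactly the condition placing $s\mapsto(t-s)^{-1+\alpha}$ in the Lebesgue space conjugate to $L^r$, so that the integral is finite and vanishes as $t\to0^+$.

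Finally, uniqueness is immediate from the transform representation: if $u_1,u_2$ are two strong solutions, their difference $w$ solves \eqref{fcp} with $u_0=0$ and $f\equiv 0$, so $\hat w(\lambda)=0$ on a right half-plane, and injectivity of the Laplace transform on continuous $H$-valued functions forces $w\equiv 0$. Alternatively, one may test the homogeneous equation for $w$ against $w(t)$ and combine a fractional chain/Grönwall inequality for $\partial_t^\alpha\|w\|^2$ with the coercivity $\langle Aw,w\rangle\ge\Lambda\|w\|^2$ to conclude $\|w(t)\|\equiv 0$ directly. I expect the third paragraph — the $D(A)$-regularity and continuity of $\partial_t^\alpha v$ for the Duhamel term under only local H\"older continuity and the sharp integrability exponent — to be the genuinely delicate part; everything else reduces to the elementary properties of $\Phi_\alpha$ and of the analytic semigroup recalled in Proposition~\ref{semigroup prop}.
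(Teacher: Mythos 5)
This statement is not proved in the paper at all: it is recalled verbatim from the literature, with an explicit citation to \cite[Theorem 2.1.7]{GW} (the detailed arguments go back to Bazhlekova's thesis \cite{bazthesis}), so there is no in-paper proof to compare against. Your proposal is, in outline, exactly the standard subordination proof that those references give: the two operator bounds from $\Phi_\alpha\geq 0$, $\int_0^\infty\Phi_\alpha=1$ and the first moment $\Gamma(2)/\Gamma(1+\alpha)$; the Laplace-transform identification of $S_\alpha$ and $P_\alpha$ with $\lambda^{\alpha-1}(\lambda^\alpha I+A)^{-1}$ and $(\lambda^\alpha I+A)^{-1}$; and the splitting $f(s)=(f(s)-f(t))+f(t)$ together with the identity $A\int_0^\sigma P_\alpha(r)x\,\de r=x-S_\alpha(\sigma)x$ to beat the non-integrable $\sigma^{-1}$ singularity of $\|AP_\alpha(\sigma)\|$ --- you have correctly identified where hypotheses (a) and (b) enter, and your computation that $r>1/\alpha$ is precisely the condition making $(t-s)^{-1+\alpha}$ lie in the conjugate Lebesgue class is right. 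The one place where your sketch is genuinely loose is uniqueness via the Laplace transform: a strong solution in the sense of Definition \ref{strongsol} lives only on $[0,T)$ and carries no growth information, so ``$\hat w(\lambda)=0$ on a right half-plane'' is not directly meaningful; you would need to first pass to the equivalent Volterra integral equation $w(t)=-\frac{1}{\Gamma(\alpha)}\int_0^t(t-s)^{\alpha-1}Aw(s)\,\de s$ and argue locally, or use your alternative energy route, which requires (and should cite) the fractional product inequality $\tfrac12\partial_t^\alpha\|w\|^2\leq\langle \partial_t^\alpha w,w\rangle$ plus a comparison principle for $\partial_t^\alpha y\leq -2\Lambda y$, $y(0)=0$, $y\geq 0$. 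With that repair the argument is complete and faithful to the cited source.
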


\bigskip

\section{Abstract inverse problem and main results}\label{abstract}

In this Section, we state the main abstract results of this paper. The first one concerns existence and uniqueness of the solution to the inverse problem consisting of the identification of an arbitrary number of constant coefficients $\lambda_1, \cdots, \lambda_n >0$, along with a $H$-valued function $u$, in a family of evolution linear Cauchy problems in $H$ displaying a fractional derivative of order $0 <\alpha \leq 1$ ($\alpha = 1$ being the ordinary first order time derivative) under the same number of energy measurements.

From now on,  we assume that $A_i : D(A_i)\subset H\to H$, for $i=1\dots,n$, are  closed, self-adjoint and strictly positive (possibly unbounded) operators on $H$. Then, for any $n$-tuple $\boldsymbol{\lambda} = (\lambda_1, \dots, \lambda_n) \in \R_+^n$, we consider the operator
$$
A(\boldsymbol{\lambda}) = \lambda_1 A_1 + \cdots + \lambda_n A_n,
$$
 which is defined on its natural domain $D(A(\boldsymbol{\lambda})) =\cap_{i = 1}^{n} D(A_{i}):=D$, endowed with the norm $\|x\|_{D}  = \sum_{i = 1}^{n}\|A_i x\|$. Here and in the following, we assume that the operators $A_i$, $i=1,\dots,n$, are mutually commuting on $D$, that is $\langle A_i x, A_jy \rangle = \langle A_j x, A_i y \rangle$ for all $x,y \in D$ and all $i,j = 1,\dots,n$. Clearly, for any $\boldsymbol{\lambda}\in \R_+^n$, also $A(\boldsymbol{\lambda})$ has the same properties  as $A$ in Section \ref{notazioni}, and commutes with $A(\boldsymbol{\mu})$ for any choice of $\boldsymbol{\mu} \in \R_+^n$.


\smallskip

\subsection{The inverse conduction problem}\label{icp} Following the functional setting discussed in Section \ref{preliminaries} and the notation introduced above, we can formally state the abstract formulation of the family of inverse problems, depending on $0<\alpha \leq 1$, which we will investigate.  

\begin{problemPa} 
Find a function $u: \overline{\R}_{+} \to H$ and the coefficients $\boldsymbol{\lambda} \in \R_+^n$ fulfilling the Cauchy problem
\begin{equation}\label{eq}
\begin{cases}
\partial_t^{\alpha} u(t) + A(\boldsymbol{\lambda}) u(t) = 0, \quad t > 0,\\[2mm]
u(0)=u_0,
\end{cases}
\end{equation}
and the additional conditions
\begin{equation}\label{ac}
\langle  A_1 u(\bar T), u(\bar T) \rangle = \varphi_1,\;\dots\;, \langle  A_n u(\bar T), u(\bar T) \rangle = \varphi_n,
\end{equation}
where $u_0\in H$ and  $\boldsymbol{\varphi} = (\varphi_1, \dots, \varphi_n) \in \R_+^n$ are given, and $\bar T > 0$ is a fixed measuring time-instant.
\end{problemPa}

\begin{remark}
We recall that, for $\alpha=1$, the problem $\boldsymbol{P_1}$, featuring an ordinary first order time derivative, has been previously studied in \cite{Mola1}, where uniqueness and continuous dependence results have been provided. In the following, we shall extend the techniques introduced in \cite{Mola1} to the fractional case $0<\alpha < 1$.
\end{remark}

Before stating the existence and uniqueness result for the strong solution of \eqref{eq}, we make some formal computations.\\
Taking $A = A(\boldsymbol{\lambda})$, from Theorem \ref{formula} it follows that there exists a unique strong solution of \eqref{eq} which can be represented as 
$$
u_{\alpha}(t) = \int_0^\infty\Phi_\alpha(\tau)e^{-\tau t^\alpha A(\boldsymbol{\lambda})}u_0\,\de\tau, \quad t>0, 
$$
where $\left\{e^{-t A(\boldsymbol{\lambda})}\right\}_{t \geq 0}$ is the analytic semigroup generated by $-A(\boldsymbol{\lambda})$. Replacing the above formula in the left-hand-side of the overdeterminating conditions \eqref{ac} at the instant $t = \bar T$, and using the semigroup identity and the self-adjointness of the semigroups in Proposition \ref{semigroup prop}-(4) and (5), we get
\begin{align*}
\langle  A_i u_{\alpha}(\bar T), u_{\alpha}(\bar T) \rangle 
= & 
\left\langle \int_0^\infty\Phi_\alpha(\tau) A_ie^{-\tau \bar T^\alpha A(\boldsymbol{\lambda})}u_0 \,\de\tau, \int_0^\infty \Phi_\alpha(\sigma)e^{-\sigma\bar T^\alpha A(\boldsymbol{\lambda})}u_0\,\de\sigma \right\rangle
\\[3mm] 
= & 
\int_0^\infty\int_0^\infty\Phi_\alpha(\tau)\Phi_\alpha(\sigma)\left\langle  A_ie^{-\frac{\tau + \sigma}{2} \bar T^\alpha A(\boldsymbol{\lambda})}u_0, e^{-\frac{\tau + \sigma}{2} \bar T^\alpha A(\boldsymbol{\lambda})}u_0 \right\rangle\,\de\tau\,\de\sigma
\end{align*}
for all $i = 1, \dots, n$. Then, going back to \eqref{ac}, we deduce that the unknown constants $\boldsymbol{\lambda}$ must fulfill the following system of nonlinear equations:
\begin{equation}\label{system}
\boldsymbol{\F}_{\alpha}(\boldsymbol{\lambda}) 
=
\boldsymbol{\varphi},
\end{equation} 
for any fixed $\boldsymbol{\varphi} \in \mathbb{R}_{+}^{n}$,
where
the map
$$
\boldsymbol{\F}_{\alpha} = (\F_{\alpha,1},\cdots,\F_{\alpha,n}):\mathbb{R}_{+}^{n} \to \mathbb{R}_{+}^{n}
$$
is component-wise defined as
$$
\F_{\alpha,i}(\boldsymbol{\lambda}) = \int_0^\infty\int_0^\infty\Phi_\alpha(\tau)\Phi_\alpha(\sigma)\left\langle A_ie^{-\frac{\tau + \sigma}{2} \bar T^\alpha A(\boldsymbol{\lambda})}u_0, e^{-\frac{\tau + \sigma}{2} \bar T^\alpha A(\boldsymbol{\lambda})}u_0 \right\rangle\,\de\tau\,\de\sigma, \quad i=1,\dots,n.
$$

Notice that \eqref{system} only depends on the unknown vector $\boldsymbol{\lambda}$.
As it is clear, the main features of the identification problem we are investigating are related to the map $\boldsymbol{\F}_{\alpha}$. In the sequel, we shall provide suitable assumptions on $u_0$ and $\boldsymbol{\varphi}$ in order to prove that $\boldsymbol{\F}_{\alpha}$ is injective. As a by-product, this will imply that the solution to the nonlinear equation \eqref{system} is unique. 

We introduce the following definitions.

\begin{definition}
An initial datum $u_0 \in D$ is {\it admissible} for the sake of our investigation if and only if
$$
\{A_{1}u_0,\cdots, A_{n}u_0\} \text{ are linearly independent vectors in $H$.}
$$
In the sequel, we shall denote by $\A$ the subset of $D$ consisting of all admissible initial data.
\end{definition}

\begin{definition}\label{def compat}
For any fixed admissible $u_0 \in \A$, we define an additional measurement $\boldsymbol{\varphi} \in \R_+^{n}$ to be {\it compatible} if and only if
$$
\boldsymbol{\varphi} \in \mathrm{Im}\,\boldsymbol{\F}_{\alpha}. 
$$
\end{definition}

We first point out an important consequence of admissibility for an initial datum $u_0 \in D$. 
\begin{lemma}\label{transport}
Let $u_0 \in \A$. Then $e^{-t A(\boldsymbol{\lambda})}u_0 \in \A$ for all $t > 0$ and all $\boldsymbol{\lambda} \in \R_{+}^{n}$.
\end{lemma}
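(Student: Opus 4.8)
The plan is to prove that admissibility is preserved under the semigroup flow, i.e.\ that if $\{A_1 u_0,\dots,A_n u_0\}$ are linearly independent in $H$, then so are $\{A_1 e^{-tA(\boldsymbol{\lambda})}u_0,\dots,A_n e^{-tA(\boldsymbol{\lambda})}u_0\}$ for every $t>0$ and every $\boldsymbol{\lambda}\in\R_+^n$. The key structural fact I would exploit is the mutual commutativity assumption on the $A_i$, together with the fact that each $A_i$ commutes with $A(\boldsymbol{\lambda})$, hence with the semigroup $e^{-tA(\boldsymbol{\lambda})}$ and its resolvent (and therefore with any bounded function of $A(\boldsymbol{\lambda})$ obtained via the functional calculus). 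This lets me \emph{factor the semigroup through} each $A_i$: for $x\in D$ we have $A_i e^{-tA(\boldsymbol{\lambda})}x = e^{-tA(\boldsymbol{\lambda})}A_i x$.

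First I would set up the linear-independence test directly. Suppose a linear combination vanishes,
$$
\sum_{i=1}^{n} c_i\, A_i\, e^{-tA(\boldsymbol{\lambda})}u_0 = 0,
\quad c_i\in\R.
$$
Using the commutation identity $A_i e^{-tA(\boldsymbol{\lambda})}u_0 = e^{-tA(\boldsymbol{\lambda})}A_i u_0$, this rewrites as
$$
e^{-tA(\boldsymbol{\lambda})}\!\left(\sum_{i=1}^{n} c_i\, A_i u_0\right) = 0.
$$
By Proposition \ref{semigroup prop}-(5) the operator $e^{-tA(\boldsymbol{\lambda})}$ is injective on $H$ for every $t\ge 0$, so the vector in parentheses must itself vanish:
$$
\sum_{i=1}^{n} c_i\, A_i u_0 = 0.
$$
Since $u_0\in\A$, the vectors $\{A_1 u_0,\dots,A_n u_0\}$ are linearly independent, forcing $c_1=\dots=c_n=0$. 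This gives the linear independence of $\{A_i e^{-tA(\boldsymbol{\lambda})}u_0\}_{i=1}^n$, and hence $e^{-tA(\boldsymbol{\lambda})}u_0\in\A$, once I also confirm that $e^{-tA(\boldsymbol{\lambda})}u_0\in D$.

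The main obstacle — and the point deserving genuine care rather than the formal one-liner above — is justifying the commutation identity $A_i e^{-tA(\boldsymbol{\lambda})}u_0 = e^{-tA(\boldsymbol{\lambda})}A_i u_0$ at the level of \emph{unbounded} operators, and checking that $e^{-tA(\boldsymbol{\lambda})}u_0$ actually lies in the common domain $D=\cap_i D(A_i)$ so that applying $A_i$ to it is even meaningful. The clean way to handle both is through the functional calculus for commuting self-adjoint operators: the pairwise commutativity hypothesis $\langle A_i x, A_j y\rangle = \langle A_j x, A_i y\rangle$ on $D$ together with self-adjointness yields commuting spectral resolutions, whence $e^{-tA(\boldsymbol{\lambda})}$ maps $D(A_i)$ into $D(A_i)$ and intertwines with $A_i$ on $D(A_i)$. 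Applied with $u_0\in D\subset D(A_i)$, this simultaneously secures $e^{-tA(\boldsymbol{\lambda})}u_0\in D(A_i)$ for each $i$ (so $e^{-tA(\boldsymbol{\lambda})}u_0\in D$) and the intertwining identity used above; one may alternatively argue via the resolvents $(A(\boldsymbol{\lambda})+\xi)^{-1}$, which commute with each $A_i$ and appear in the integral representation of the semigroup, passing the commutation through the Bochner integral and closedness of $A_i$. Everything else is immediate from the injectivity in Proposition \ref{semigroup prop}-(5).
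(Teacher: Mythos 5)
Your argument is exactly the one the paper intends: the lemma's proof is stated there as a direct consequence of the injectivity of $e^{-tA(\boldsymbol{\lambda})}$ (Proposition \ref{semigroup prop}-(5)) and the commutation of the semigroup with the operators $A_i$, which is precisely how you test linear independence of $\{A_i e^{-tA(\boldsymbol{\lambda})}u_0\}$ by pushing a vanishing combination back to $\{A_i u_0\}$. Your additional care about domain preservation and justifying the intertwining for unbounded $A_i$ via the spectral calculus or the resolvent representation only fills in details the paper leaves implicit.
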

The proof of the above lemma is a direct consequence of the injectivity of the semigroup operator (cf. Proposition \ref{semigroup prop}-(5)) and the commutativity between the semigroup and the operators $A_i$. 

\subsection{Main results} We are now in the position to state our main results. The first one concerns existence and uniqueness of the solutions of $\boldsymbol{P_{\alpha}}$ for any $0<\alpha \leq 1$.

\begin{theorem}\label{mainresult}
Fixed $T>0$, for any admissible initial datum $u_0\in \A$ and all compatible additional measurements $\boldsymbol{\varphi}\in \mathrm{Im}\,\boldsymbol{\F}_{\alpha}$, there exists a unique solution $\left(u_{\alpha},\boldsymbol{\lambda}_{\alpha}\right)$ to Problem $\boldsymbol{P_{\alpha}}$, with
$$
u_{\alpha} \in C([T_1,T_2];D) \cap  C([0,T);H),\; \Dta u_\alpha\in C([T_1,T_2];H),\; 0<T_1\leq T_2<T,$$
and $\boldsymbol{\lambda}_{\alpha}  \in  \R_+^n
$, where $\boldsymbol{\varphi}$ satisfies \eqref{ac} with $\bar{T}\in[T_1,T_2]$,  
\end{theorem}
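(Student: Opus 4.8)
The plan is to exploit the reduction, already carried out before the statement, of Problem $\boldsymbol{P_\alpha}$ to the scalar nonlinear system $\boldsymbol{\F}_\alpha(\boldsymbol{\lambda})=\boldsymbol{\varphi}$ in \eqref{system}: existence follows at once from compatibility, while the whole content lies in proving that $\boldsymbol{\F}_\alpha$ is injective. For \emph{existence}, since $\boldsymbol{\varphi}$ is compatible, Definition \ref{def compat} yields a $\boldsymbol{\lambda}_\alpha\in\R_+^n$ with $\boldsymbol{\F}_\alpha(\boldsymbol{\lambda}_\alpha)=\boldsymbol{\varphi}$. Applying Theorem \ref{formula} to \eqref{eq} with $A=A(\boldsymbol{\lambda}_\alpha)$ and $f\equiv 0$ (so hypotheses (a) and (b) hold trivially, and $u_0\in D\subset H$) produces a unique strong solution $u_\alpha$ with exactly the stated regularity; by the computation preceding \eqref{system}, its value $u_\alpha(\bar T)$ satisfies \eqref{ac}. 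Hence $(u_\alpha,\boldsymbol{\lambda}_\alpha)$ solves $\boldsymbol{P_\alpha}$.

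For \emph{uniqueness} I would prove that $\boldsymbol{\F}_\alpha$ is strictly monotone (decreasing), which gives injectivity. Fix $\boldsymbol{\lambda},\boldsymbol{\mu}\in\R_+^n$, set $B=\sum_{i=1}^n(\lambda_i-\mu_i)A_i$ (self-adjoint on $D$, with $B=B^{\ast}$), and interpolate $\boldsymbol{\lambda}(s)=(1-s)\boldsymbol{\mu}+s\boldsymbol{\lambda}$, noting $\boldsymbol{\lambda}(s)\in\R_+^n$ for $s\in[0,1]$. Writing $c=\tfrac{\tau+\sigma}{2}\bar T^\alpha>0$ and $w_s=e^{-cA(\boldsymbol{\lambda}(s))}u_0$, the joint spectral calculus of the mutually commuting family $\{A_i\}$ makes $B$ commute with the semigroup and, since $A(\boldsymbol{\lambda}(s))=A(\boldsymbol{\mu})+sB$ with commuting summands and $u_0\in D$, gives $\tfrac{d}{ds}w_s=-c\,e^{-cA(\boldsymbol{\lambda}(s))}Bu_0=-cBw_s$, bounded by $c\|Bu_0\|$ uniformly in $s$. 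Differentiating the integrand of $\F_{\alpha,i}$ and using self-adjointness yields $\tfrac{d}{ds}\langle A_iw_s,w_s\rangle=-2c\langle A_iBw_s,w_s\rangle$, so summing against $\lambda_i-\mu_i$ and using $\sum_i(\lambda_i-\mu_i)A_i=B=B^{\ast}$ gives $\sum_i(\lambda_i-\mu_i)\tfrac{d}{ds}\langle A_iw_s,w_s\rangle=-2c\|Bw_s\|^2$.

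The uniform bound on $Bw_s$ together with finiteness of the first moment $\int_0^\infty\tau\Phi_\alpha(\tau)\,\de\tau$ justifies differentiation under the double integral (dominated convergence); integrating in $s$ over $[0,1]$ and in $(\tau,\sigma)$ against $\Phi_\alpha(\tau)\Phi_\alpha(\sigma)$ then produces
$$\sum_{i=1}^n(\lambda_i-\mu_i)\big(\F_{\alpha,i}(\boldsymbol{\lambda})-\F_{\alpha,i}(\boldsymbol{\mu})\big)=-\int_0^1\!\!\int_0^\infty\!\!\int_0^\infty \Phi_\alpha(\tau)\Phi_\alpha(\sigma)\,2c\,\|Bw_s\|^2\,\de\tau\,\de\sigma\,\de s\le 0,$$
with equality iff $Bw_s=0$ almost everywhere. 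If $\boldsymbol{\F}_\alpha(\boldsymbol{\lambda})=\boldsymbol{\F}_\alpha(\boldsymbol{\mu})$ the left-hand side vanishes, forcing $B\,e^{-cA(\boldsymbol{\lambda}(s))}u_0=0$. By Lemma \ref{transport}, $e^{-cA(\boldsymbol{\lambda}(s))}u_0\in\A$, so $\{A_i\,e^{-cA(\boldsymbol{\lambda}(s))}u_0\}_{i=1}^n$ are linearly independent; since $\sum_i(\lambda_i-\mu_i)A_i\,e^{-cA(\boldsymbol{\lambda}(s))}u_0=0$, all $\lambda_i-\mu_i=0$, i.e.\ $\boldsymbol{\lambda}=\boldsymbol{\mu}$. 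Thus $\boldsymbol{\F}_\alpha$ is injective, $\boldsymbol{\lambda}_\alpha$ is unique, and uniqueness of $u_\alpha$ follows from the uniqueness assertion in Theorem \ref{formula}.

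The \emph{main obstacle} is the rigorous justification of this monotonicity computation: differentiating the operator family $s\mapsto e^{-cA(\boldsymbol{\lambda}(s))}u_0$ in the parameter — handled through the joint spectral representation of the commuting operators and the assumption $u_0\in D$, which keeps $Bu_0\in H$ and avoids domain and unboundedness issues even as $c\to 0^{+}$ — and exchanging this derivative with the double $(\tau,\sigma)$-integral, controlled by the moment identity for $\Phi_\alpha$. Once the resulting energy equality is established, converting it into the coefficient identity $\boldsymbol{\lambda}=\boldsymbol{\mu}$ is a clean application of admissibility via Lemma \ref{transport}.
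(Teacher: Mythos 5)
Your proposal is correct and follows the paper's overall reduction: existence is read off from compatibility (Definition \ref{def compat}) together with Theorem \ref{formula} applied with $f\equiv 0$, and the entire content is the injectivity of $\boldsymbol{\F}_{\alpha}$. Where you genuinely diverge is in how injectivity is obtained. The paper computes the Jacobian $\boldsymbol{\F}_{\alpha}'(\boldsymbol{\lambda})$ (Proposition \ref{properties}), recognizes its integrand as the Gram matrix of the vectors $A_i e^{-\frac{\tau+\sigma}{2}\bar T^{\alpha}A(\boldsymbol{\lambda})}u_0$, concludes negative definiteness from admissibility via Lemma \ref{transport}, and then invokes the Gale--Nikaido global univalence theorem (Theorem \ref{injectivity1}) as a black box. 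You instead prove strict monotonicity of $\boldsymbol{\F}_{\alpha}$ by integrating along the segment $\boldsymbol{\lambda}(s)=(1-s)\boldsymbol{\mu}+s\boldsymbol{\lambda}$: your identity $\sum_i(\lambda_i-\mu_i)\tfrac{d}{ds}\langle A_iw_s,w_s\rangle=-2c\|Bw_s\|^2$ is exactly the quadratic form of the paper's Gram matrix evaluated at $\boldsymbol{\lambda}-\boldsymbol{\mu}$, and your endgame ($Bw_s=0$ together with Lemma \ref{transport} forcing $\boldsymbol{\lambda}=\boldsymbol{\mu}$) uses admissibility in the same way. In effect you have unwound the citation into a self-contained proof of the positive-definite-Jacobian case of Gale--Nikaido, which the convexity of $\R_+^n$ makes available. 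This buys transparency --- the equality case of your monotonicity inequality shows precisely where admissibility enters --- at the cost of justifying the $s$-differentiation under the double $(\tau,\sigma)$-integral, which you handle via $u_0\in D$, the commuting functional calculus and the finite first moment of $\Phi_{\alpha}$; the paper's Proposition \ref{properties}(a) tacitly requires the same justification, so no rigor is lost relative to the original.
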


The second result concerns the convergence of the solutions of problems $\boldsymbol{P_{\alpha}}$ to the one of $\boldsymbol{P_{1}}$ as $\alpha \to 1^{-}$. 
\begin{theorem}\label{mainresult1}
For $u_0\in \A$, let $\left(u_{\alpha},\boldsymbol{\lambda}_{\alpha}\right)$ be the corresponding solution to problem $\boldsymbol{P_{\alpha}}$ as in Theorem \ref{mainresult}. Then
$$
u_{\alpha}(t) \to u_{1}(t) \quad\text{ in } H
$$ 
for all $t \in (0,T)$, and 
$$
\boldsymbol{\lambda}_{\alpha} \to \boldsymbol{\lambda}_{1} \quad \text{in }\R^{n}_{+}
$$
as $\alpha \to 1^{-}$. 
\end{theorem}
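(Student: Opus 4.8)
The plan is to split the statement into two tasks: first prove the convergence of the coefficients, $\boldsymbol{\lambda}_\alpha\to\boldsymbol{\lambda}_1$, and then deduce the convergence of the states, $u_\alpha(t)\to u_1(t)$. The whole argument rests on a scalar reduction of \eqref{system}: by the joint spectral calculus for the commuting family $A_1,\dots,A_n$, writing $m=\langle E(\cdot)u_0,u_0\rangle$ for the scalar spectral measure attached to $u_0$ (with $E$ the joint spectral measure) and $\ell(\mu)=\sum_j\lambda_j\mu_j$, the components in \eqref{system} factor as
$$
\F_{\alpha,i}(\boldsymbol{\lambda}) = \int \mu_i\left(\int_0^\infty \Phi_\alpha(\tau)\,e^{-\tau \bar T^\alpha \ell(\mu)}\,\de\tau\right)^{\!2}\de m(\mu),
$$
where the inner integral is the Mittag--Leffler function evaluated at $-\bar T^\alpha\ell(\mu)$. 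Since this function is completely monotone with values in $(0,1]$ and converges to $e^{-\bar T\ell(\mu)}$ as $\alpha\to1^-$, I would first record, by dominated convergence (the majorant being $\mu_i\in L^1(\de m)$, as $u_0\in D$ gives $\langle A_iu_0,u_0\rangle<\infty$), that $\F_{\alpha,i}\to\F_{1,i}$ pointwise, and in fact uniformly on compact subsets of $\R_+^n$. The same representation shows that each $\F_{\alpha,i}$ is strictly decreasing in every $\lambda_j$ (since $\mu_j\ge\Lambda>0$ on the spectrum) and obeys $0\le\F_{\alpha,i}(\boldsymbol{\lambda})\le\langle A_iu_0,u_0\rangle$ uniformly in $\alpha$.

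The core is a compactness step for $\{\boldsymbol{\lambda}_\alpha\}$: I would show there exist $\alpha_0<1$ and a compact $K\subset\R_+^n$ with $\boldsymbol{\lambda}_\alpha\in K$ for all $\alpha\in(\alpha_0,1)$. The upper bound $|\boldsymbol{\lambda}_\alpha|\le C$ follows because $\F_{\alpha,i}(\boldsymbol{\lambda})\to0$ as $|\boldsymbol{\lambda}|\to\infty$ uniformly in $\alpha$ near $1$ (using a uniform majorant $\int_0^\infty\Phi_\alpha(\tau)e^{-\tau\bar T^\alpha\ell(\mu)}\,\de\tau\le C/(1+\ell(\mu))$ for the Mittag--Leffler factor together with $\ell(\mu)\ge\Lambda\min_j\lambda_j$), which is incompatible with $\F_{\alpha,i}(\boldsymbol{\lambda}_\alpha)=\varphi_i>0$ once $|\boldsymbol{\lambda}|$ is large; the bound away from $\partial\R_+^n$ is obtained by excluding boundary accumulation through the injectivity of the limiting map, extended to $\overline{\R}_+^n\setminus\{0\}$ via Lemma \ref{transport} and Proposition \ref{Gram}. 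This confinement, uniform in $\alpha$, is the step I expect to be the \emph{main obstacle}.

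Granting compactness, I would close by a subsequence-and-uniqueness argument. Any sequence $\alpha_k\to1^-$ admits a subsequence along which $\boldsymbol{\lambda}_{\alpha_k}\to\boldsymbol{\lambda}^*\in K\subset\R_+^n$; combining the uniform convergence $\boldsymbol{\F}_{\alpha}\to\boldsymbol{\F}_1$ on $K$ with the continuity of $\boldsymbol{\F}_1$ yields $\boldsymbol{\varphi}=\boldsymbol{\F}_{\alpha_k}(\boldsymbol{\lambda}_{\alpha_k})\to\boldsymbol{\F}_1(\boldsymbol{\lambda}^*)$, hence $\boldsymbol{\F}_1(\boldsymbol{\lambda}^*)=\boldsymbol{\varphi}$. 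The uniqueness part of Theorem \ref{mainresult} at $\alpha=1$ then forces $\boldsymbol{\lambda}^*=\boldsymbol{\lambda}_1$; since every subsequential limit equals $\boldsymbol{\lambda}_1$ and $\{\boldsymbol{\lambda}_\alpha\}$ is precompact, the full family converges, $\boldsymbol{\lambda}_\alpha\to\boldsymbol{\lambda}_1$.

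Finally, for the states I would use the representation of Theorem \ref{formula}, writing $S_\alpha(t;\boldsymbol{\lambda})u_0=\int_0^\infty\Phi_\alpha(\tau)e^{-\tau t^\alpha A(\boldsymbol{\lambda})}u_0\,\de\tau$, and split
$$
u_\alpha(t)-u_1(t) = \big(S_\alpha(t;\boldsymbol{\lambda}_\alpha)-S_\alpha(t;\boldsymbol{\lambda}_1)\big)u_0 + \big(S_\alpha(t;\boldsymbol{\lambda}_1)u_0 - e^{-tA(\boldsymbol{\lambda}_1)}u_0\big).
$$
The second term tends to $0$ in $H$ by the direct-problem convergence as $\alpha\to1^-$ (cf. \cite{Carvalho}), which itself follows from the concentration of $\Phi_\alpha$ at $\tau=1$ and $t^\alpha\to t$ via spectral calculus. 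For the first term, a mean-value estimate in the spectral calculus bounds it by $C\,t^\alpha\tfrac{\Gamma(2)}{\Gamma(1+\alpha)}\,|\boldsymbol{\lambda}_\alpha-\boldsymbol{\lambda}_1|\,\big(\int|\mu|^2\,\de m(\mu)\big)^{1/2}$, with $\int|\mu|^2\,\de m=\sum_i\|A_iu_0\|^2<\infty$ since $u_0\in D$; the prefactor is uniformly bounded in $\alpha$, so $\boldsymbol{\lambda}_\alpha\to\boldsymbol{\lambda}_1$ sends it to $0$. This gives $u_\alpha(t)\to u_1(t)$ in $H$ for every $t\in(0,T)$ and completes the proof.
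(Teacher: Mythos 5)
Your proof of the state convergence $u_{\alpha}(t)\to u_{1}(t)$ is essentially the paper's: the same splitting into $\bigl(S_{\alpha}(t;\boldsymbol{\lambda}_{\alpha})-S_{\alpha}(t;\boldsymbol{\lambda}_{1})\bigr)u_0$ plus $\bigl(S_{\alpha}(t;\boldsymbol{\lambda}_{1})-S_{1}(t;\boldsymbol{\lambda}_{1})\bigr)u_0$, with a Lipschitz-in-$\boldsymbol{\lambda}$ bound carrying the constant $t^{\alpha}\Gamma(2)/\Gamma(1+\alpha)$ for the first piece (the paper obtains it via Duhamel's formula for the difference equation rather than a spectral mean-value estimate, but the content is identical) and the direct-problem convergence from \cite{Carvalho} for the second. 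Where you genuinely diverge is the coefficient convergence. The paper treats $(\alpha,\boldsymbol{\lambda})\mapsto\boldsymbol{\F}_{\alpha}(\boldsymbol{\lambda})-\boldsymbol{\varphi}$ as a jointly continuous, $\boldsymbol{\lambda}$-differentiable map whose Jacobian is nonsingular (Proposition \ref{properties}), and invokes the implicit function theorem at $(1,\boldsymbol{\lambda}_{1})$ to produce a continuous branch $\alpha\mapsto\boldsymbol{\lambda}_{\alpha}$ on a left neighborhood of $1$; this is local, short, and gives as a by-product that $\boldsymbol{\varphi}\in\mathrm{Im}\,\boldsymbol{\F}_{\alpha}$ for $\alpha$ near $1$. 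You instead run a global compactness--plus--uniqueness scheme: uniform convergence $\boldsymbol{\F}_{\alpha}\to\boldsymbol{\F}_{1}$ on compacta (via the Mittag--Leffler/spectral representation, which is a nice explicit reformulation the paper never writes down), confinement of $\{\boldsymbol{\lambda}_{\alpha}\}$ in a compact $K\subset\R_{+}^{n}$, and then subsequence extraction combined with the injectivity of $\boldsymbol{\F}_{1}$. Your route buys a statement that does not depend on differentiability in $\alpha$ and makes the mechanism (every cluster point solves the limit equation) transparent; its price is exactly the confinement step you flag as the main obstacle, and in particular the exclusion of boundary accumulation, which you only sketch: to make it rigorous you must extend $\boldsymbol{\F}_{1}$, Lemma \ref{transport}, and the Gram-matrix positivity to $\overline{\R}_{+}^{n}$ (where some $\lambda_{j}=0$ and $A(\boldsymbol{\lambda})$ is only nonnegative) so that Theorem \ref{injectivity1} applies on the closed convex orthant and forces any boundary cluster point to coincide with the interior solution $\boldsymbol{\lambda}_{1}$ — a contradiction. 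This is doable but is real extra work that the paper's implicit-function argument sidesteps entirely.
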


The proofs of Theorems \ref{mainresult} and \ref{mainresult1} will be presented in Sections \ref{main1} and \ref{main2}, respectively.

\medskip

\section{Existence and uniqueness: proof of Theorem \ref{mainresult}}\label{main1}
We divide the proof into two parts: \emph{uniqueness} and \emph{existence}. We begin from the former one, which is a crucial point in our investigation.

\subsection{Injectivity of $\boldsymbol{\F_{\alpha}}$: uniqueness}\label{injectivity}
We first point out some analytical properties of the map $\boldsymbol{\F_{\alpha}}$.

\begin{proposition}\label{properties}
Let $\boldsymbol{\F}_{\alpha}$ be as in
\eqref{system}. Then
\begin{itemize}
\item[\textbf{\emph{(a)}}] $\boldsymbol{\F}_{\alpha} \in C^{1}(\mathbb{R}_{+}^{n};\,\mathbb{R}_{+}^{n})$,
and its differential $\boldsymbol{\F}_{\alpha}'(\boldsymbol{\lambda})$ is given by the $n \times n$ Jacobi matrix
$$
-2\bar{T}^\alpha 
\left(	\int_0^\infty\int_0^\infty(\tau + \sigma)\Phi_\alpha(\tau)\Phi_\alpha(\sigma)\left\langle  A_ie^{-\frac{\tau + \sigma}{2} \bar T^\alpha A(\boldsymbol{\lambda})}u_0, A_je^{-\frac{\tau + \sigma}{2} \bar T^\alpha A(\boldsymbol{\lambda})}u_0  \right\rangle\,\de\tau\,\de\sigma\right)_{i,j=1,\cdots,n}
$$
for all $\boldsymbol{\lambda} \in \R_{+}^{n}$;
\item[\textbf{\emph{(b)}}] the Jacobi matrix $\boldsymbol{\F}_{\alpha}'(\boldsymbol{\lambda})$ is negative defined.
\end{itemize}
\end{proposition}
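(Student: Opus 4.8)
My plan is to establish (a) by differentiating $\boldsymbol{\F}_{\alpha}$ under the double integral sign, and (b) by recognizing the resulting Jacobi matrix as the integral, against a strictly positive scalar weight, of a \emph{Gram matrix}, whose positive definiteness is then guaranteed by Proposition \ref{Gram} combined with the injectivity of the semigroup and the transport of admissibility (Lemma \ref{transport}).

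For (a), I would fix $\boldsymbol{\lambda}\in\R_{+}^{n}$ and abbreviate $s=s(\tau,\sigma)=\tfrac{\tau+\sigma}{2}\bar{T}^{\alpha}$ and $w=w(\tau,\sigma;\boldsymbol{\lambda})=e^{-sA(\boldsymbol{\lambda})}u_0$. Since $A(\boldsymbol{\lambda})=\sum_k\lambda_k A_k$ and the $A_k$ mutually commute and commute with the semigroup, the joint spectral representation of the family $\{A_k\}$ yields $\partial_{\lambda_j}e^{-sA(\boldsymbol{\lambda})}=-sA_je^{-sA(\boldsymbol{\lambda})}$, hence $\partial_{\lambda_j}w=-sA_jw$. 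Applying the product rule to the integrand $\langle A_iw,w\rangle$ and using self-adjointness and commutativity (so that $\langle A_iA_jw,w\rangle=\langle A_iw,A_jw\rangle$), the two contributions coincide and give
\[
\partial_{\lambda_j}\langle A_iw,w\rangle=-2s\,\langle A_iw,A_jw\rangle=-(\tau+\sigma)\bar{T}^{\alpha}\,\langle A_iw,A_jw\rangle .
\]
Integrating in $(\tau,\sigma)$ produces the Jacobi matrix announced in (a). To make the interchange of $\partial_{\lambda_j}$ and the double integral rigorous, I would invoke the theorem on differentiation under the integral sign: since $u_0\in\A\subset D$, commutativity gives $A_iw=e^{-sA(\boldsymbol{\lambda})}A_iu_0$, whence the $\boldsymbol{\lambda}$- and $s$-uniform contraction bound $\|A_iw\|\le\|A_iu_0\|$ and $|\langle A_iw,A_jw\rangle|\le\|A_iu_0\|\,\|A_ju_0\|$; the weight $(\tau+\sigma)\Phi_\alpha(\tau)\Phi_\alpha(\sigma)$ is integrable over $(0,\infty)^2$ because $\int_0^\infty\tau\Phi_\alpha(\tau)\,\de\tau=\Gamma(2)/\Gamma(1+\alpha)<\infty$ and $\int_0^\infty\Phi_\alpha(\sigma)\,\de\sigma=1$. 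This supplies a $\boldsymbol{\lambda}$-uniform integrable dominating function. Continuity of the partial derivatives in $\boldsymbol{\lambda}$ (hence $\boldsymbol{\F}_{\alpha}\in C^{1}$) then follows from the strong continuity of $\boldsymbol{\lambda}\mapsto e^{-sA(\boldsymbol{\lambda})}A_iu_0$, again read off the spectral decomposition, together with a second dominated-convergence argument under the same integrable majorant.

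For (b), I would test the quadratic form of $-\boldsymbol{\F}_{\alpha}'(\boldsymbol{\lambda})$ against an arbitrary $\boldsymbol{\xi}=(\xi_1,\dots,\xi_n)\in\R^{n}$. Since $\sum_{i,j}\xi_i\xi_j\langle A_iw,A_jw\rangle=\big\|\sum_i\xi_iA_iw\big\|^{2}$, one gets, for a positive constant $C$ proportional to $\bar{T}^{\alpha}$,
\[
-\sum_{i,j}\xi_i\xi_j\,\big[\boldsymbol{\F}_{\alpha}'(\boldsymbol{\lambda})\big]_{ij}
=C\int_0^\infty\!\!\int_0^\infty(\tau+\sigma)\Phi_\alpha(\tau)\Phi_\alpha(\sigma)\Big\|\sum_{i=1}^{n}\xi_iA_iw\Big\|^{2}\,\de\tau\,\de\sigma\ \ge 0 .
\]
The matrix $(\langle A_iw,A_jw\rangle)_{i,j}$ is exactly the Gram matrix of $\{A_1w,\dots,A_nw\}$. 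Writing $\sum_i\xi_iA_iw=e^{-sA(\boldsymbol{\lambda})}\big(\sum_i\xi_iA_iu_0\big)$, the admissibility of $u_0$ forces $\sum_i\xi_iA_iu_0\neq 0$ when $\boldsymbol{\xi}\neq 0$, and the injectivity of the semigroup (Proposition \ref{semigroup prop}-(5)) then forces $\sum_i\xi_iA_iw\neq 0$ for every $\tau,\sigma>0$; equivalently, by Lemma \ref{transport} the vectors $\{A_1w,\dots,A_nw\}$ remain linearly independent, so their Gram matrix is positive definite (Proposition \ref{Gram}). Since the scalar weight $(\tau+\sigma)\Phi_\alpha(\tau)\Phi_\alpha(\sigma)$ is strictly positive on $(0,\infty)^2$, the integrand is strictly positive a.e.\ and the integral is strictly positive; hence $-\boldsymbol{\F}_{\alpha}'(\boldsymbol{\lambda})$ is positive definite, i.e.\ $\boldsymbol{\F}_{\alpha}'(\boldsymbol{\lambda})$ is negative defined.

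The algebraic steps (product rule on the inner product, the explicit multiplicative constant, the passage to the squared norm) are routine. The genuine obstacle lies in the analytic justification inside (a): validating the formula $\partial_{\lambda_j}e^{-sA(\boldsymbol{\lambda})}=-sA_je^{-sA(\boldsymbol{\lambda})}$ and the $\boldsymbol{\lambda}$-continuity of $e^{-sA(\boldsymbol{\lambda})}A_iu_0$ at the level of the (unbounded) generators, and controlling the interchange of differentiation/limit with the improper double integral uniformly in $\boldsymbol{\lambda}$. I would handle the semigroup differentiation and continuity cleanly through the joint spectral measure of the commuting self-adjoint family $\{A_k\}$, which reduces both to scalar dominated-convergence statements, and then rely on the uniform majorant described above to transfer them under the integral.
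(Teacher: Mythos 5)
Your proof is correct and follows essentially the same route as the paper: part (a) via the derivation formula of Proposition \ref{semigroup prop}-(2) together with commutativity (you additionally supply the dominated-convergence justification, using the first moment of $\Phi_\alpha$, which the paper leaves implicit as ``straightforward''), and part (b) via the Gram-matrix identification, Lemma \ref{transport} and Proposition \ref{Gram}, with the positivity of the weight $(\tau+\sigma)\Phi_\alpha(\tau)\Phi_\alpha(\sigma)$. One small point: your (correct) computation $\partial_{\lambda_j}\langle A_i w,w\rangle=-(\tau+\sigma)\bar T^{\alpha}\langle A_i w,A_j w\rangle$ actually yields the prefactor $-\bar T^{\alpha}$ rather than the $-2\bar T^{\alpha}$ displayed in the statement, so you should not claim exact agreement with the announced matrix; the discrepancy is a harmless constant that affects neither the sign nor the definiteness conclusions.
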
	

\begin{proof}
Point \textbf{\emph{(a)}} is a straightforward application of the derivation formula in Proposition \ref{semigroup prop}-(2) and the commutativity of operators $A_i$.

In order to prove \textbf{\emph{(b)}}, we first notice that by the positivity of the Wright function $\Phi_\alpha$, this is equivalent to prove that the matrix
$$
\boldsymbol{\mathcal{M}}_\alpha(\boldsymbol{\lambda})
=
\left(
\left\langle  A_ie^{-\frac{\tau + \sigma}{2} \bar T^\alpha A(\boldsymbol{\lambda})}u_0, A_je^{-\frac{\tau + \sigma}{2} \bar T^\alpha A(\boldsymbol{\lambda})}u_0\right\rangle 
\right)_{i,j=1,\cdots,n}
$$
is positive defined. This is implied by the fact that $	\boldsymbol{\mathcal{M}}_\alpha(\boldsymbol{\lambda})$ is the Gram matrix in $H$ associated with the vectors
$$
A_1e^{-\frac{\tau + \sigma}{2} \bar T^\alpha A(\boldsymbol{\lambda})}u_0,
\;\dots \;, A_ne^{-\frac{\tau + \sigma}{2} \bar T^\alpha A(\boldsymbol{\lambda})}u_0
$$
which are linearly independent by Lemma \ref{transport}.
\end{proof}

Finally, the injectivity of $\boldsymbol{\F}_{\alpha}$ is a consequence of the following general result (cf. \cite[Theorem 6]{GN}).

\begin{theorem}\label{injectivity1}
Let $\mathcal{C}\subseteq \R^{n}$ be a convex nonempty set, and let
$\boldsymbol{\F} \in C^{1}(\mathcal{C};\R^{n})$ be such that its differential $\boldsymbol{\F}'(\mathbf{x})$ is positive (or negative) defined at every point $\mathbf{x} \in \mathcal{C}$. Then $\boldsymbol{\F}$ is injective on $\mathcal{C}$.
\end{theorem}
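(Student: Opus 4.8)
The plan is to reduce the multivariate injectivity question to a one-dimensional monotonicity argument along line segments, which is exactly where the convexity hypothesis enters. Suppose, for contradiction, that $\boldsymbol{\F}$ fails to be injective, so that there exist two distinct points $\mathbf{a}, \mathbf{b} \in \mathcal{C}$ with $\boldsymbol{\F}(\mathbf{a}) = \boldsymbol{\F}(\mathbf{b})$. Set $\mathbf{v} = \mathbf{b} - \mathbf{a} \neq \mathbf{0}$. By convexity of $\mathcal{C}$, the entire segment $\mathbf{a} + t\mathbf{v}$, $t \in [0,1]$, lies in $\mathcal{C}$, so the scalar function
$$
\phi(t) = \langle \boldsymbol{\F}(\mathbf{a} + t\mathbf{v}), \mathbf{v} \rangle, \quad t \in [0,1],
$$
is well defined.

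Next I would differentiate $\phi$. Since $\boldsymbol{\F} \in C^{1}(\mathcal{C};\R^{n})$, the chain rule yields $\phi \in C^{1}([0,1])$ with
$$
\phi'(t) = \langle \boldsymbol{\F}'(\mathbf{a} + t\mathbf{v})\,\mathbf{v}, \mathbf{v} \rangle .
$$
Here the definiteness hypothesis is decisive: assuming $\boldsymbol{\F}'(\mathbf{x})$ is positive defined for every $\mathbf{x} \in \mathcal{C}$ (the negative case being identical after replacing $\boldsymbol{\F}$ by $-\boldsymbol{\F}$), and recalling that $\mathbf{v} \neq \mathbf{0}$, we obtain $\phi'(t) > 0$ for all $t \in [0,1]$. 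Consequently $\phi$ is strictly increasing, so $\phi(0) < \phi(1)$, that is, $\langle \boldsymbol{\F}(\mathbf{a}), \mathbf{v} \rangle < \langle \boldsymbol{\F}(\mathbf{b}), \mathbf{v} \rangle$.

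On the other hand, the assumed equality $\boldsymbol{\F}(\mathbf{a}) = \boldsymbol{\F}(\mathbf{b})$ forces $\phi(0) = \phi(1)$, contradicting the strict monotonicity just established. Hence no such pair $\mathbf{a} \neq \mathbf{b}$ can exist, and $\boldsymbol{\F}$ is injective on $\mathcal{C}$.

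I do not anticipate any serious obstacle: the argument is a direct mean-value/monotonicity computation, and the role of each hypothesis is transparent (convexity guarantees the segment stays in the domain, the $C^1$ regularity legitimizes the chain rule, and definiteness delivers the strict sign of $\phi'$). The only point demanding mild care is that \emph{positive defined} must be read as $\langle M\mathbf{x}, \mathbf{x}\rangle > 0$ for all $\mathbf{x} \neq \mathbf{0}$, even when the Jacobian $M = \boldsymbol{\F}'(\mathbf{x})$ is not symmetric; since this is precisely the quantity produced by the chain rule in the expression for $\phi'(t)$, the hypothesis is consumed exactly as stated, with no need to invoke symmetry.
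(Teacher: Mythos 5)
Your proof is correct. Note that the paper does not actually prove this statement: it is quoted as a known result, namely Theorem~6 of Gale and Nikaido (reference [GN] in the bibliography), so there is no in-paper argument to compare against. The argument you give --- restricting to the segment joining two putative preimages, forming $\phi(t)=\langle \boldsymbol{\F}(\mathbf{a}+t\mathbf{v}),\mathbf{v}\rangle$, and deriving strict monotonicity from $\phi'(t)=\langle \boldsymbol{\F}'(\mathbf{a}+t\mathbf{v})\mathbf{v},\mathbf{v}\rangle>0$ --- is exactly the classical Gale--Nikaido proof for positive (quasi-)definite Jacobians on convex domains, and your closing remark is the right one: definiteness must be understood as $\langle M\mathbf{v},\mathbf{v}\rangle>0$ for $\mathbf{v}\neq\mathbf{0}$ without any symmetry assumption, which is precisely the quantity the chain rule produces. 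The only point one could quibble about is that $\mathcal{C}$ is not assumed open, so differentiability along the segment at boundary points needs the usual reading of $C^{1}(\mathcal{C};\R^{n})$ (extension to a neighborhood or one-sided derivatives); this does not affect the argument, since continuity of $\phi$ on $[0,1]$ together with $\phi'>0$ on the interior already yields $\phi(0)<\phi(1)$.
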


\begin{remark}
We stress that the commutativity assumption of operators $A_1,\dots,A_n$ is necessary for the sake of the uniqueness result above. In fact, in the Appendix section of \cite{MOY}, a counterexample is displayed in the case where $\alpha = 1$ and $n = 2$.
\end{remark}

\medskip


\subsection{Basic properties of $\mathrm{Im}\,\boldsymbol{\F_{\alpha}}$: existence}\label{prop Im F}

As previously mentioned, the existence of a solution to the nonlinear equation \eqref{system} is, in fact, equivalent to the compatibility condition displayed in Definition \ref{def compat}. In the following, we provide a description of the set $\mathrm{Im}\,\boldsymbol{\F_{\alpha}}$ based on analytical considerations; meanwhile, in Section \ref{Numerical simulations} a numerical description will be outlined. 

\subsubsection*{Topological properties} First, as a further consequence of the injectivity of map $\boldsymbol{\F_{\alpha}}$ proven in Subsection \ref{injectivity} and since $\boldsymbol{\F}_{\alpha}'(\boldsymbol{\lambda})$ is non singular at every $\boldsymbol{\lambda}$, we learn that its inverse map
$$
\boldsymbol{\F}_{\alpha}^{-1}: \mathrm{Im}\,\boldsymbol{\F}_{\alpha} \to \R^n_+
$$
is differentiable on its domain. Therefore, $\boldsymbol{\F}_{\alpha}$ is open. 

Moreover, by the semigroup decay estimate and the properties of function $\Phi_\alpha$, it is immediate to see that
$$
|\F_{\alpha,i}(\boldsymbol{\lambda})|
\leq
\left(\int_0^\infty\int_0^\infty\Phi_\alpha(\tau)\Phi_\alpha(\sigma)\,\de\tau\,\de\sigma\right)\|A_iu_0\|\|u_0\|
=
\|A_iu_0\|\|u_0\|, \quad i=1,\dots,n,
$$
so that we deduce the limitation
$$
|\boldsymbol{\F}_{\alpha}(\boldsymbol{\lambda})|_n \leq \|u_0\|_{D}\|u_0\|,
$$
where we denoted by $|\cdot|_{n}$ the usual Euclidean norm in $\R^{n}$.
\smallskip

In summary, we have proved the following.
\begin{proposition}
	Let $u_0\in\mathcal{A}$ be an admissible initial datum. Then $\mathrm{Im}\,\boldsymbol{\F}_{\alpha}$ is an open and path-connected subset of $\R^n_+$, bounded by $\|u_0\|_{D}\|u_0\|$ (uniformly with respect to $\alpha$ and $\boldsymbol{\lambda}$).
\end{proposition}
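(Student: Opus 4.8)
The plan is to prove the three asserted properties — openness, path-connectedness, and the uniform bound — essentially independently, since each rests on a different ingredient already established, and to note that $\mathrm{Im}\,\boldsymbol{\F}_\alpha \subseteq \R_+^n$ is automatic because each component $\F_{\alpha,i}(\boldsymbol{\lambda}) = \langle A_i u_\alpha(\bar T), u_\alpha(\bar T)\rangle = \|A_i^{1/2} u_\alpha(\bar T)\|^2$ is nonnegative (and positive, by strict positivity and Lemma \ref{transport}).

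For openness I would invoke the inverse function theorem. By Proposition \ref{properties}\textbf{\emph{(a)}} the map $\boldsymbol{\F}_\alpha$ is $C^1$ on the open set $\R_+^n$, and by Proposition \ref{properties}\textbf{\emph{(b)}} its Jacobian $\boldsymbol{\F}_\alpha'(\boldsymbol{\lambda})$ is negative defined — in particular non-singular — at every $\boldsymbol{\lambda} \in \R_+^n$. Hence $\boldsymbol{\F}_\alpha$ is a local $C^1$-diffeomorphism, and a local diffeomorphism is an open map; since $\R_+^n$ is open, its image $\mathrm{Im}\,\boldsymbol{\F}_\alpha = \boldsymbol{\F}_\alpha(\R_+^n)$ is open in $\R^n$, which is precisely the observation recorded above that $\boldsymbol{\F}_\alpha^{-1}$ is differentiable on $\mathrm{Im}\,\boldsymbol{\F}_\alpha$. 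For path-connectedness I would use that $\R_+^n = (0,\infty)^n$ is convex, hence path-connected, and that $\boldsymbol{\F}_\alpha$ is continuous: concretely, given $\boldsymbol{\varphi}^0 = \boldsymbol{\F}_\alpha(\boldsymbol{\lambda}^0)$ and $\boldsymbol{\varphi}^1 = \boldsymbol{\F}_\alpha(\boldsymbol{\lambda}^1)$ in the image, the curve $s \mapsto \boldsymbol{\F}_\alpha\big((1-s)\boldsymbol{\lambda}^0 + s\boldsymbol{\lambda}^1\big)$, $s \in [0,1]$, is a path in $\mathrm{Im}\,\boldsymbol{\F}_\alpha$ joining them.

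For the uniform bound I would estimate each component. Writing $u_\alpha(\bar T) = S_\alpha(\bar T)u_0$ and applying Cauchy–Schwarz gives $|\F_{\alpha,i}(\boldsymbol{\lambda})| \leq \|A_i u_\alpha(\bar T)\|\,\|u_\alpha(\bar T)\|$. The estimate $\|S_\alpha(t)x\| \leq \|x\|$ from Theorem \ref{formula} yields $\|u_\alpha(\bar T)\| \leq \|u_0\|$; and since $A_i$ commutes with $A(\boldsymbol{\lambda})$, hence with each $e^{-\tau \bar T^\alpha A(\boldsymbol{\lambda})}$ and therefore with $S_\alpha(\bar T)$, one has $A_i u_\alpha(\bar T) = S_\alpha(\bar T) A_i u_0$, so that $\|A_i u_\alpha(\bar T)\| \leq \|A_i u_0\|$ as well. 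Combining, $|\F_{\alpha,i}(\boldsymbol{\lambda})| \leq \|A_i u_0\|\,\|u_0\|$, and summing in the Euclidean norm gives $|\boldsymbol{\F}_\alpha(\boldsymbol{\lambda})|_n \leq \big(\sum_i \|A_i u_0\|^2\big)^{1/2}\|u_0\| \leq \|u_0\|_D\|u_0\|$, where the last step uses that the Euclidean norm is dominated by the $\ell^1$-norm $\|u_0\|_D = \sum_i \|A_i u_0\|$. Crucially, none of the constants depend on $\alpha$ or on $\boldsymbol{\lambda}$, which delivers the claimed uniformity.

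The only genuinely delicate point is the commutation $A_i u_\alpha(\bar T) = S_\alpha(\bar T) A_i u_0$, which requires pulling the closed, possibly unbounded operator $A_i$ through the Bochner integral defining $S_\alpha(\bar T)$. I expect this interchange to be the main (mild) obstacle; it is legitimate because $u_0 \in D$ guarantees that the integrand $\tau \mapsto \Phi_\alpha(\tau)\,e^{-\tau \bar T^\alpha A(\boldsymbol{\lambda})}u_0$ takes values in $D(A_i)$ with $A_i$-applied integrand again Bochner-integrable, and the closedness of $A_i$ then permits commuting it with the integral. Every other step is a routine application of results already in hand, so I do not anticipate further difficulty.
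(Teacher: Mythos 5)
Your proposal is correct and follows essentially the same route as the paper: openness from the non-singular Jacobian (Proposition \ref{properties}) via the inverse function theorem, path-connectedness as the continuous image of the convex set $\R^n_+$, and the bound $\|A_iu_0\|\,\|u_0\|$ on each component from the semigroup contraction property together with $\int_0^\infty\Phi_\alpha\,\de\tau=1$. The only cosmetic difference is that the paper performs the Cauchy--Schwarz estimate inside the double integral defining $\F_{\alpha,i}$ rather than at the level of $S_\alpha(\bar T)$, and your remark on pulling the closed operator $A_i$ through the Bochner integral is a legitimate point the paper leaves implicit.
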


\section{Numerical simulations}\label{Numerical simulations} In order to fix ideas about the possible outcomes for the shape of $\mathrm{Im}\,\boldsymbol{\F}_{\alpha}$, we use numerical analysis tools to solve a discretized version of several examples of the direct problem described in Subsection \ref{bounded} (thermal conductivity on bounded sets). This allows for visualizing both the direct problem solution and, more importantly, a sketch of the map between the diffusion parameters and the output, i.e., the $L^2$ norms of the solution first derivatives. All the results presented and discussed in this Section will refer to two-dimensional problems ($n = 2$). Despite the computational tool developed for the solution of the direct problem is in principle able to also tackle the solution of three-dimensional problems, the authors believe that a 2D setup results in more informative plots of the input-to-output maps.



Here, the direct problem $\boldsymbol{P}_\alpha$ has been discretized in space using the finite element method \cite{qua-val}. In particular, the Galerkin method is implemented here making use of the software tools included in the deal.II library \cite{2024:africa.arndt.ea:deal}, and Lagrange shape functions of order one have been selected. An L1 scheme method \cite{old-spa} is used for the time integration of the unsteady problem at hand. This numerical setup allowed us to evaluate the solution of the direct problem making use of different combinations of domain $\Omega$ shapes, initial data, fractional derivative exponent $\alpha$ and diffusion coefficients $(\lambda_1,\lambda_2)$. Thus, a simulation campaign has been designed to assess the possible dependence of domain shape, initial conditions, and fractional derivative exponents on the map between the input diffusion parameters and the output solution derivative norms.

As illustrated in Fig.~\ref{fig::initial_sol}, two different shapes have been considered for the domain $\Omega$. In addition, for each domain shape, we considered two different initial conditions.  The top plots in the figure depict an L-shaped domain geometric configuration, namely
$$
\Omega_1 = \left([-1,1]\times[-1,1]\right) \setminus \left([0,1]\times[0,1]\right).
$$

The alternative geometry used is represented by the disc with eccentric round hole depicted in the bottom plots of Fig.~\ref{fig::initial_sol}. Defining as $C_{ext}$ the circle with center in $c_{ext}=\{0,0\}$ and radius $\rho_{ext}=1$, and with $C_{int}$ the circle with center in $c_{int}=\{0.2,0.2\}$ and radius $\rho_{ext}=0.3$, the effective geometry is defined as  
$$
\Omega_2 = C_{ext} \setminus C_{int}.
$$

Concerning the initial conditions, all the different combinations considered have been obtained through linear combinations of the $C^\infty$ cutoff function given by
$$
f_{cutoff}(\boldsymbol{x};\boldsymbol{x_0},s) = e^{1-\frac{1}{1-||\boldsymbol{x}-\boldsymbol{x_0}||^2/s^2}} \quad \boldsymbol{x}\in \Omega\subset \mathbb{R}^2.
$$
Here $\Omega=\Omega_1$, or $\Omega=\Omega_2$, $s\in\mathbb{R}^+$ is the cutoff radius, and $\boldsymbol{x_0}\in\mathbb{R}^2$ is the center of the cutoff function. 

The two different initial conditions used for $\Omega_1$ and depicted in the first row of plots in Fig.~\ref{fig::initial_sol} read
\begin{eqnarray*}
\phi_1^{\Omega_1}(\boldsymbol{x}) &=& f_{cutoff}(\boldsymbol{x},\{-0.5,-0.5\},0.3) \\
\phi_2^{\Omega_1}(\boldsymbol{x}) &=& f_{cutoff}(\boldsymbol{x},\{-0.5,-0.3\},0.35)+f_{cutoff}(\boldsymbol{x},\{-0.55,0.55\},0.25)
\end{eqnarray*}

The initial conditions used with the $\Omega_2$ test cases read instead
\begin{eqnarray*}
\phi_1^{\Omega_2}(\boldsymbol{x}) &=& f_{cutoff}(\boldsymbol{x},\{-0.5,-0.5\},0.35) \\
\phi_2^{\Omega_2}(\boldsymbol{x}) &=& f_{cutoff}(\boldsymbol{x},\{-0.1,-0.55\},0.35)+f_{cutoff}(\boldsymbol{x},\{-0.55,0.55\},0.25)
\end{eqnarray*}
and are portrayed in the second row of Fig. \ref{fig::initial_sol} (left and right plots, respectively).  In the numerical algorithm used, the latter condition is enforced by means of a suitable set of constraints introduced in the linear algebraic system resulting from the discretization and solved at each time step. We point out that, as can be appreciated in all the plots, all the initial data functions $\phi_1^{\Omega_1},\phi_2^{\Omega_1},\phi_1^{\Omega_2},\phi_2^{\Omega_2}$ present a null boundary value, in order  to be compatible with the homogeneous Dirichlet boundary condition imposed.  Finally, we remark that all the plots presented in the Fig.  \ref{fig::initial_sol} also include the computational grid used for the simulations.

\begin{figure}[!h]
\begin{tabular}{cc}
\includegraphics[width=0.48\textwidth]{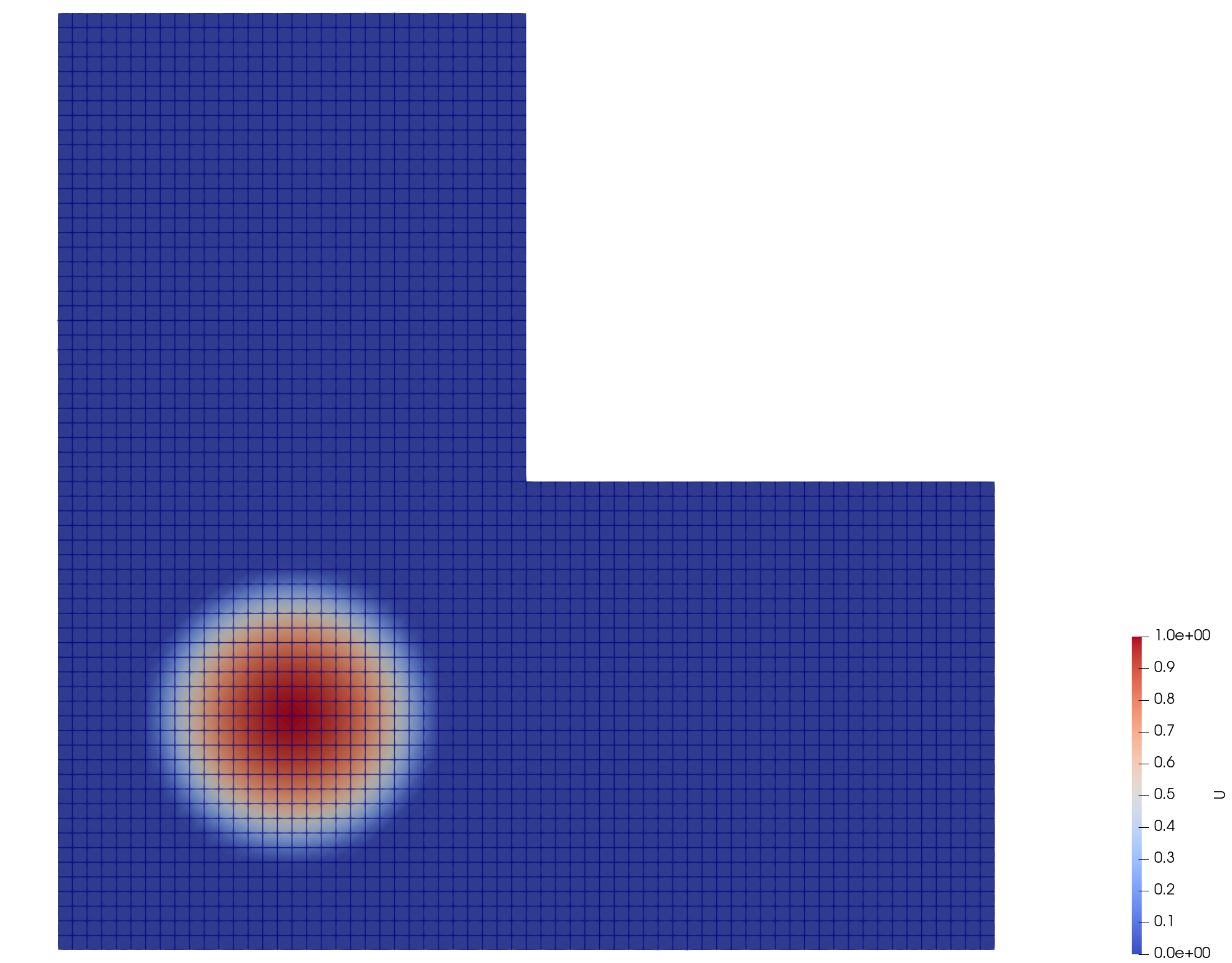}     &
\includegraphics[width=0.48\textwidth]{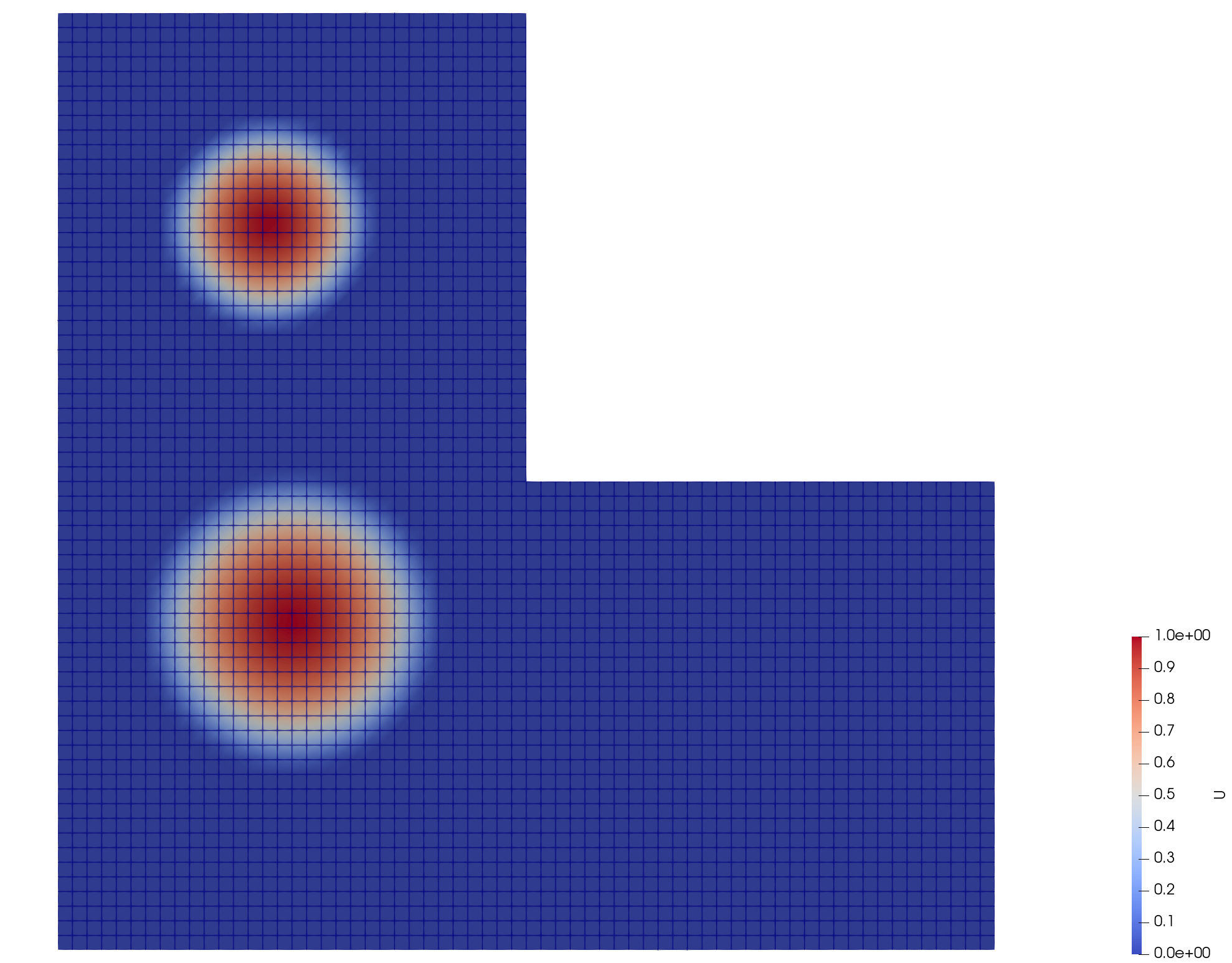} \\
a) & b) \\
\includegraphics[width=0.48\textwidth]{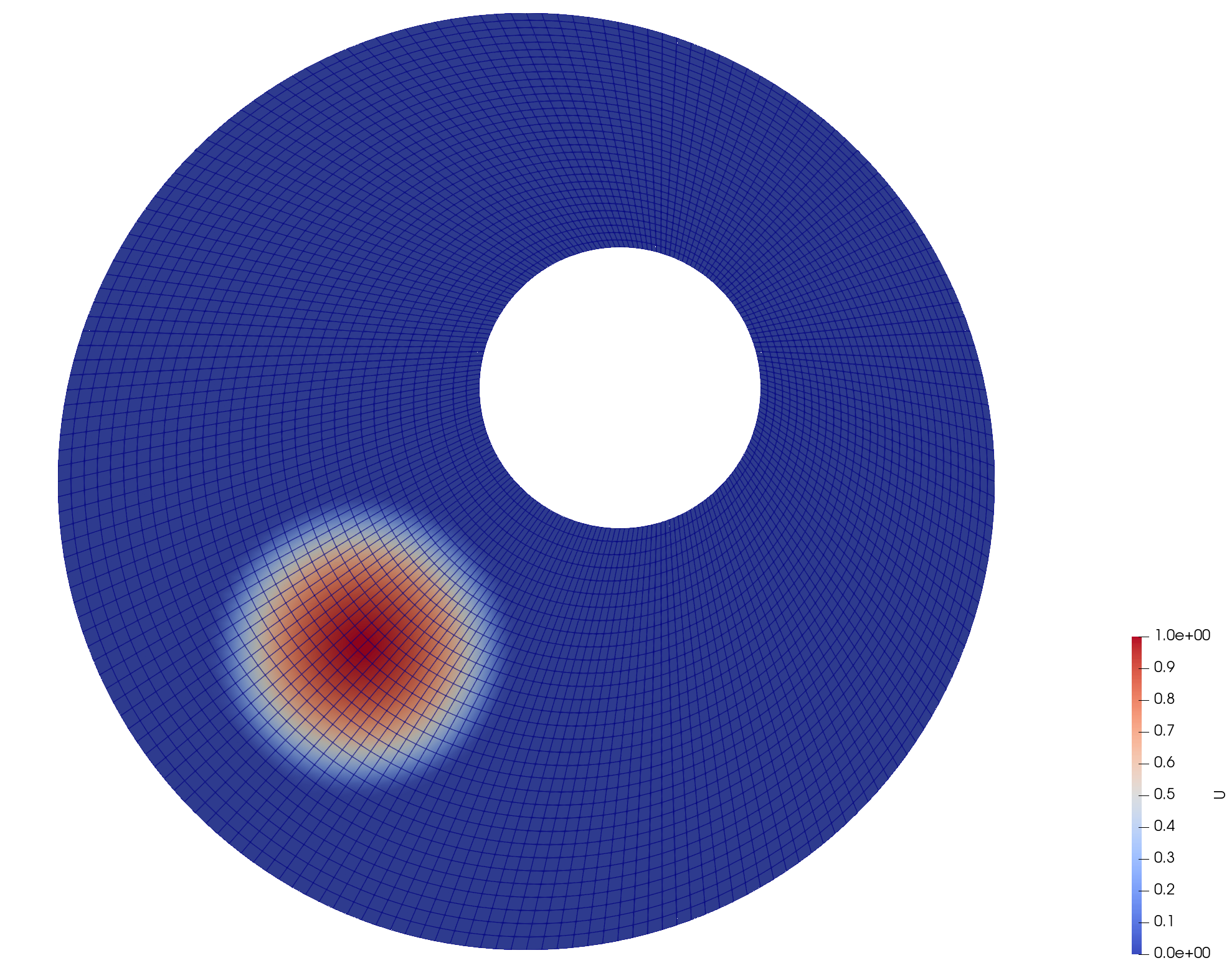}  &
\includegraphics[width=0.48\textwidth]{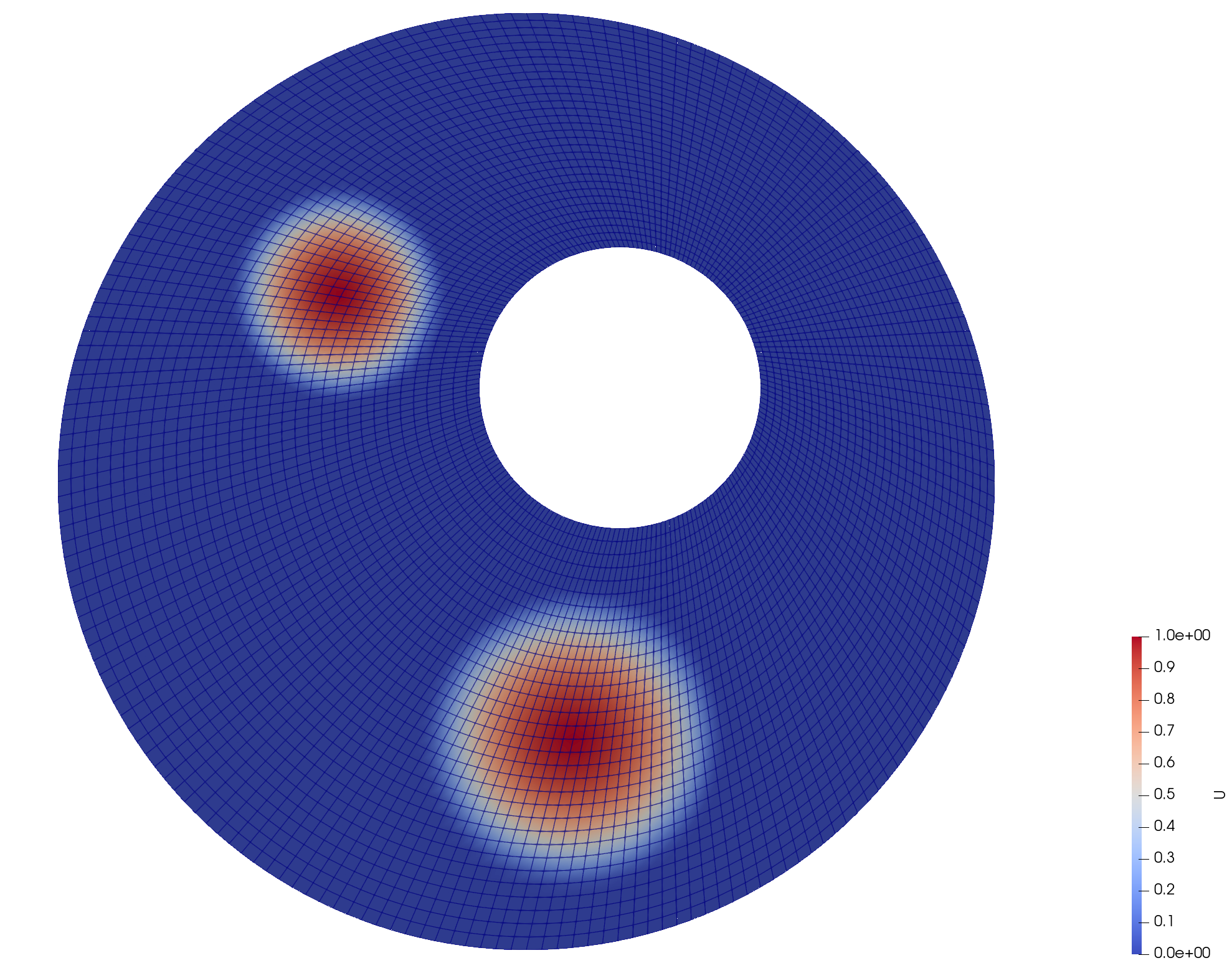} \\
c) & d) 
\end{tabular}
	
	\caption{A sketch of the computational domains used for the forward problem simulations. The top row plots depict the L-shaped domain, while the bottom plots are referred to the circle with eccentric hole. Each plot also features the contour plot of the initial condition function prescribed: a) symmetrically centered $C^\infty$ cutoff function; b) two asymmetrically centered $C^\infty$ cutoff functions of different radii; c) symmetrically centered $C^\infty$ cutoff function; d) two asymmetrically centered $C^\infty$ cutoff functions of different radii. All the plots also show the computational grid used for the corresponding forward simulation.\label{fig::initial_sol}}
\end{figure}

Each of the four combinations of domain and initial conditions described has been used to run simulations with three values of the fractional derivative exponent $\alpha = 0.25,0,75,1.0$. In addition, for each of the 12 resulting problems, the diffusion coefficient samples considered $(\lambda_1,\lambda_2)$ are the ones summarized in the $10 \times 10$ grid featured in Fig~\ref{fig::diff_params}.

\begin{figure}[!h]
	\includegraphics[width=11cm]{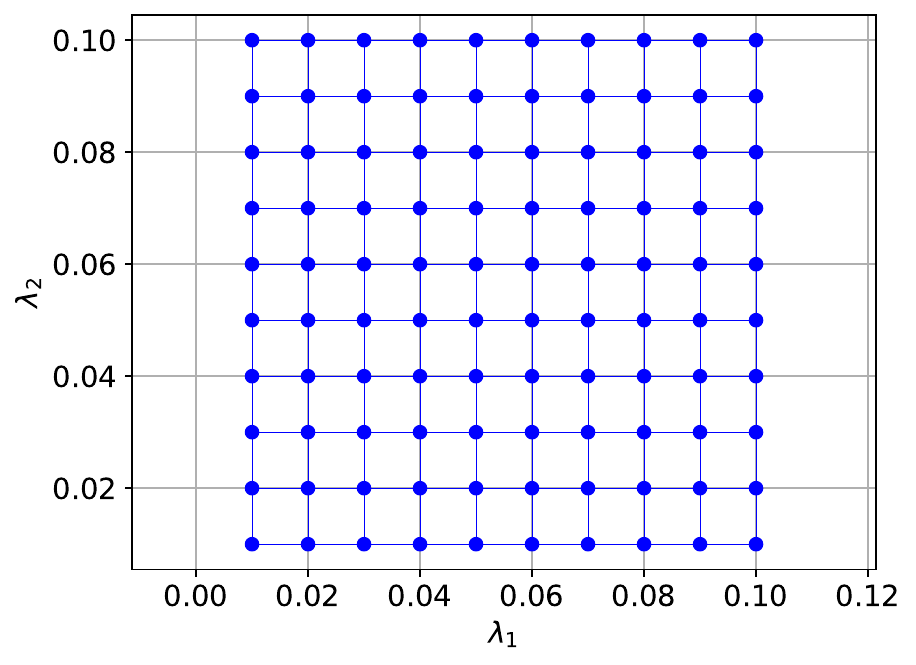}
	\caption{Diffusion parameters. \label{fig::diff_params}}
\end{figure}



All of the 1200 initial-boundary value problems considered have been solved in the time interval $t\in[0,0.04]$, and at the end of such interval, the $L^2$ norms of the solution derivatives $||u_x||_2$ and $||u_y||_2$ have been computed within the framework of a post processing step.  In the diagrams presented in Figs. \ref{fig::output_L_symm}, \ref{fig::output_L_asymm}, \ref{fig::output_eccentric_symm} and \ref{fig::output_eccentric_asymm} (see Appendix \ref{numerical}), for each configuration of $\Omega_i$ and $\phi_j^{\Omega_i}$, $i,j=1,2$ considered in the simulation campaign, the horizontal axis represents the values of the solution $x$ derivative norm $||u_x||_2$ at time $t=0.04$, while the vertical axis represents the values of the solution $y$ derivative norm $||u_y||_2$ at the same -- final -- time instant of the simulation. Thus, each point $(||u_x||_2,||u_y||_2)$ in the above plots represents the image of the input parameter pair $(\lambda_1,\lambda_2)$ through $\boldsymbol{\F}_{\alpha}$. For such a reason, the output parameters generated by the set of input parameter points can provide valuable information on the map $\boldsymbol{\F}_{\alpha}$. We also point out that the points in Fig. \ref{fig::all_results} and in Figs. \ref{fig::output_L_symm}, \ref{fig::output_L_asymm}, \ref{fig::output_eccentric_symm} and \ref{fig::output_eccentric_asymm} of Appendix \ref{numerical} are colored according to the value of fractional derivative exponent considered, namely
$$
\color{red}{\bullet} \,\,\, \color{black}{\alpha = 0.25}
\qquad
\color{verde}{\bullet} \,\,\, \color{black}{\alpha = 0.75}
\qquad
\color{black}{\bullet} \,\,\, \alpha = 1.
$$

We point out that all the four output plots presented in Fig.~\ref{fig::all_results} include, for each value of $\alpha$, a grid that has been generated with the same connectivity used for the input parameters and is illustrated in Fig. \ref{fig::diff_params}. Indeed, a first interesting observation that can be drawn from all the output plots presented is related to the way the input parameter points grid is deformed by map $\boldsymbol{\F}_{\alpha}$. Remarkably, the fact that the structure of the grid in the parameter space is preserved in the output space, and the topology of the cells is not altered by the input-to-output map, is an indirect confirmation of the fact that the solution of the inverse problem at hand is unique. On the other hand, the original input parameter space square (see Fig. \ref{fig::diff_params}) in the output is mapped by $\boldsymbol{\F}_{\alpha}$ in an image shape which -- despite remaining that of a stretched and deformed quadrilateral -- is rather difficult to identify and associate with possible characteristics of the problem. Thus, the present numerical investigation is less able to provide indications regarding the existence of the inverse problem solution. However, it must be pointed out that the topology and general shape of the output parameter plots observed are not affected by the different domain shape, initial condition, and fractional derivative exponent considered. This could in principle suggest the possibility to identify a common structure of the map image $\boldsymbol{\F}_{\alpha}$, and possibly provide ideas toward proving the existence of the inverse problem solution.

\begin{figure}[!h]
\begin{tabular}{cc}
\includegraphics[width=0.48\textwidth]{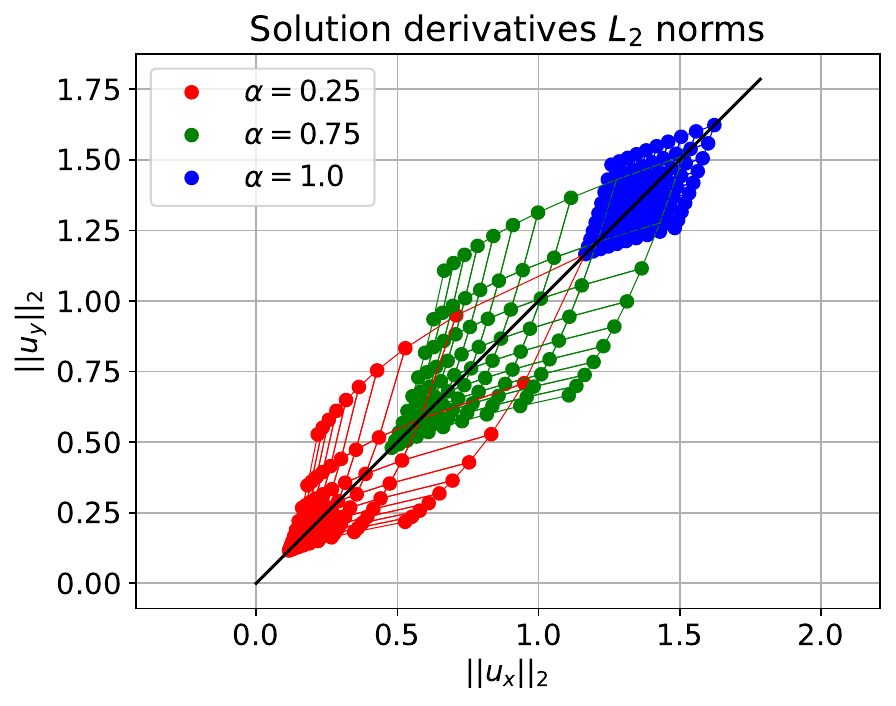}     &
\includegraphics[width=0.48\textwidth]{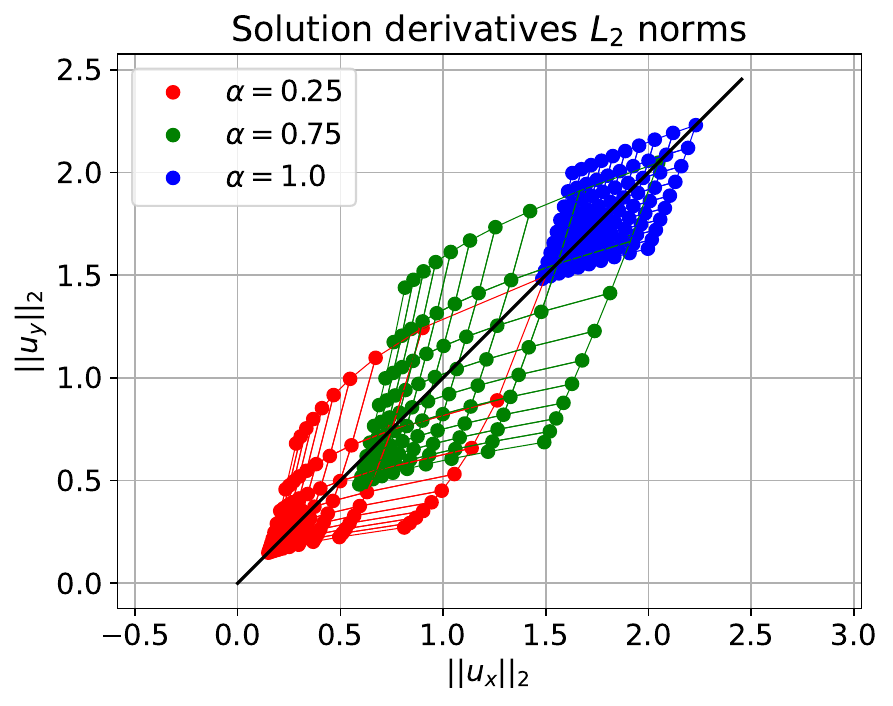} \\
a) & b) \\
\includegraphics[width=0.48\textwidth]{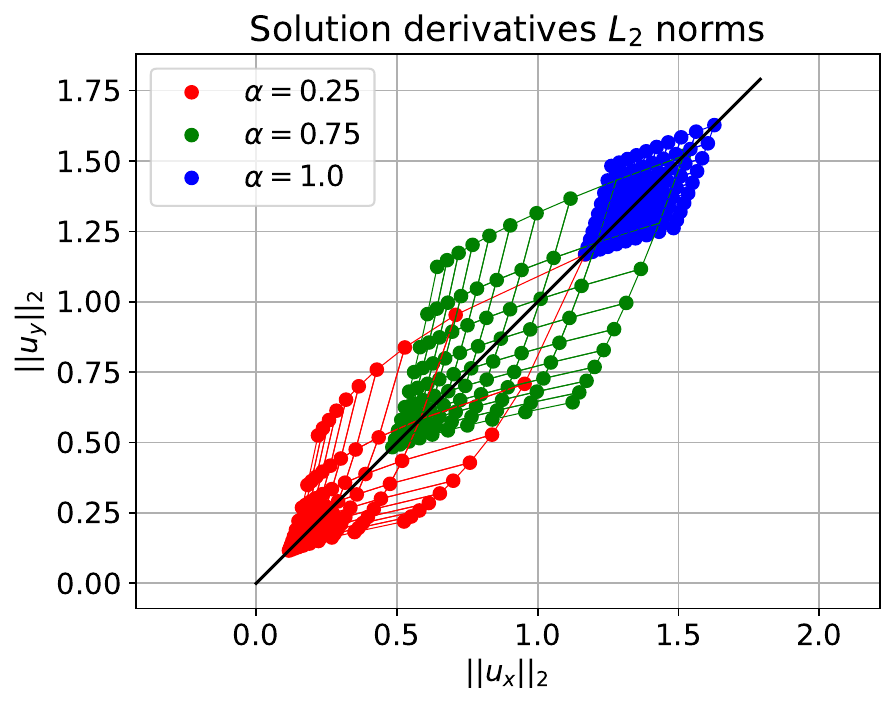}  &
\includegraphics[width=0.48\textwidth]{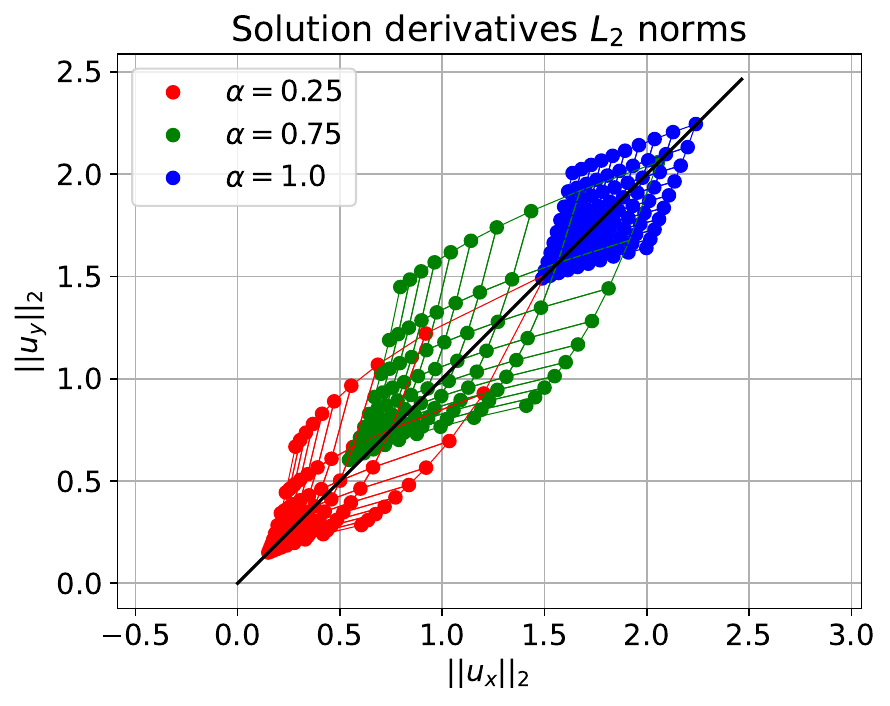} \\
c) & d) 
\end{tabular}
	
	\caption{Solution derivatives norms $||u_x||_2$ vs $||u_y||_2$ obtained using the diffusion parameters in Fig~\ref{fig::diff_params} in the problem with: a) L-shaped domain and initial condition featuring a single symmetric $C^\infty$ cutoff function; b) L-shaped domain and initial condition featuring two asymmetrically centered $C^\infty$ cutoff functions; c) circular domain with eccentric hole and initial condition featuring a single symmetric $C^\infty$ cutoff function; d) circular domain with eccentric hole and initial condition featuring two asymmetrically centered $C^\infty$ cutoff functions.\label{fig::all_results}}
\end{figure}

A further common trait in all the four output diagrams is the fact that, as expected, the least dissipative result corresponds to the fractional derivative exponent $\alpha=1$. In fact, the higher output parameters values obtained with $\alpha=1$ indicate that the solution has been subjected by less diffusion, with respect to the ones generated using $\alpha=0.75$ and $\alpha=0.25$. Conversely, the most diffusive simulations, which also lead to generally lower values of the solution derivatives norms, are the ones associated to the lowest value of fractional derivative exponent $\alpha=0.25$ considered. Finally, we point out that as expected, plots a) and c) in Fig~\ref{fig::all_results} appear symmetric with respect to the axis bisector line. In fact, they refer to symmetric domain and boundary conditions combinations. 


\section{Convergence of solutions as $\alpha \to 1^{-}$: proof of Theorem \ref{mainresult1}}\label{main2}

In this Section, we prove the convergence of the solution of inverse problem $\boldsymbol{P_{\alpha}}$ as it is approaching its singular limit $\boldsymbol{P_{1}}$. In the following, we shall denote by $S_{\alpha, \boldsymbol{\lambda}}(t)$ and $P_{\alpha, \boldsymbol{\lambda}}(t)$ the operators introduced in Theorem \ref{formula} associated to the generator $A(\boldsymbol{\lambda})$.

\smallskip

We divide the proof into four steps.

\smallskip

\subsubsection*{Step 1} We begin by noticing that (cf. \cite[Lemma 3.12]{Carvalho}) for every fixed $u_0 \in H$ and $\boldsymbol{\lambda}$, there holds
$$
S_{\alpha, \boldsymbol{\lambda}}(t)u_0 \to S_{1, \boldsymbol{\lambda}}(t)u_0 \quad \text{as} \quad \alpha \to 1^{-},
$$
uniformly on every bounded set in $(0,\infty)$. This implies the continuity of the solution of \eqref{eq} as $\alpha \to 1^{-}$, since the continuity in the interval $(0,1)$ is a direct consequence of formulae in Theorem \ref{formula}. 

\smallskip

\subsubsection*{Step 2} Collecting the results provided in \emph{Step 1} and Proposition \ref{properties}-\textbf{\emph{(a)}}, we deduce that the $\R_+^{n}$-valued function
$$
(\alpha, \boldsymbol{\lambda}) \mapsto \boldsymbol{\F}_{\alpha}(\boldsymbol{\lambda})
$$
is continuous in $(0,1] \times \R_+^{n}$ and is $\boldsymbol{\lambda}$-differentiable for all $\alpha \in (0,1]$. 
Since, under the additional assumption $u_0 \in \A$ , the Jacobi matrix $\boldsymbol{\F}'_{\alpha}(\boldsymbol{\lambda})$ is not singular for all $\alpha$ and $\boldsymbol{\lambda}$ (cf. Proposition \ref{properties}-\textbf{\emph{(b)}}), then the equation
$$
\boldsymbol{\F}_{\alpha}(\boldsymbol{\lambda}) = \boldsymbol{\varphi} \in \mathrm{Im}\, \boldsymbol{\F}_{1}
$$
defines a continuous implicit function 
$
\boldsymbol{\lambda} = \boldsymbol{\lambda}_{\alpha}
$
in a left neighborhood of $\alpha = 1$. This proves that 
$$
\boldsymbol{\lambda}_{\alpha} \to \boldsymbol{\lambda}_{1} \quad \text{as} \quad \alpha \to 1^{-}.
$$
As a byproduct, this shows that $\boldsymbol{\varphi} \in \mathrm{Im}\, \boldsymbol{\F}_{\alpha}$ for every $\alpha$ close enough to $1$.

\smallskip

\subsubsection*{Step 3} Now, let $\alpha \in (0,1)$ and $u_0 \in \A$ be fixed. Consider two $n$-tuples of coefficients $\boldsymbol{\lambda}, \boldsymbol{\mu} \in \R_{+}^{n}$, and denote by $u$ and $v$ the respective solutions of the direct problem \eqref{eq} governed by operators $A(\boldsymbol{\lambda})$ and $A(\boldsymbol{\mu})$ and originating from the same initial datum $u_0$. Then, their difference fulfills the Cauchy problem
\begin{equation}\label{eqdiff}
\begin{cases}
\partial_t^{\alpha} (u-v)(t) + A(\boldsymbol{\lambda}) (u-v)(t) =  -\left(A(\boldsymbol{\lambda}) -A(\boldsymbol{\mu})\right) v(t), \quad t\in(0,T),\\[2mm]
(u-v)(0)=0.
\end{cases}
\end{equation}
Then, by Theorem \ref{formula} and using the commutativity of $A(\boldsymbol{\lambda})$ and $A(\boldsymbol{\mu})$, we learn 
\begin{align*}
(u-v)(t) 
& =
-\int_{0}^{t}P_{\alpha, \boldsymbol{\lambda}}(t-s)
\left(A(\boldsymbol{\lambda})- A(\boldsymbol{\mu})\right) S_{\alpha, \boldsymbol{\mu}}(s)u_0\,\de s
\\[2mm]
& =
-\int_{0}^{t}P_{\alpha, \boldsymbol{\lambda}}(t-s)
S_{\alpha, \boldsymbol{\mu}}(s)\left(A(\boldsymbol{\lambda})- A(\boldsymbol{\mu})\right)u_0\,\de s
\end{align*}
so that
\begin{align*}
\|(u-v)(t)\| 
& \leq
\alpha \dfrac{\Gamma(2)}{\Gamma(1+\alpha)}\int_{0}^{t}(t-s)^{-1+\alpha}
\left\| S_{\alpha, \boldsymbol{\mu}}(s)\left(A(\boldsymbol{\lambda})- A(\boldsymbol{\mu})\right)u_0 \right\|\,\de s
\\[2mm]
& \leq
\dfrac{\Gamma(2)}{\Gamma(1+\alpha)}t^{\alpha}
\left\|\left(A(\boldsymbol{\lambda})- A(\boldsymbol{\mu})\right)u_0\right\| 
\leq
\dfrac{\Gamma(2)}{\Gamma(1+\alpha)}t^{\alpha}\sum_{i=1}^{n}|\lambda_{i}-\mu_{i}|\|A_i u_0\|
\\[2mm]
& \leq \dfrac{\Gamma(2)}{\Gamma(1+\alpha)}t^{\alpha}\|u_0\|_{D} \max_{i=1, \cdots, n}|\lambda_{i}-\mu_{i}|.
\end{align*}
This proves the Lipschitz estimate
$$
\left\|S_{\alpha, \boldsymbol{\lambda}}(t)u_0 - S_{\alpha, \boldsymbol{\mu}}(t)u_0 \right\| \leq C_n(\alpha, t,u_0) |\boldsymbol{\lambda} - \boldsymbol{\mu}|_{n}.
$$
Here, the constant $C_n(\alpha, t,u_0)$ depends continuously on $\alpha \in (0,1)$, $t \in (0,T)$ and $u_0\in D$. The same result holds for the limiting case $\alpha = 1$ as a consequence of basic parabolic estimates.

\smallskip

\subsubsection*{Step 4} Finally, we consider the functions $u_\alpha(t)$ and $u_1(t)$ as defined in Theorem \ref{mainresult1}. Then, using the estimate provided in \emph{Step 3}, we have
\begin{align*}
\|u_\alpha(t) - u_1(t)\| 
& = 
\left\|S_{\alpha, \boldsymbol{\lambda}_\alpha}(t)u_0 - S_{1, \boldsymbol{\lambda}_1}(t)u_0 \right\|
\\[2mm]
& \leq
\left\|S_{\alpha, \boldsymbol{\lambda}_\alpha}(t)u_0 - S_{\alpha, \boldsymbol{\lambda}_1}(t)u_0 \right\|
+
\left\|S_{\alpha, \boldsymbol{\lambda}_1}(t)u_0 - S_{1, \boldsymbol{\lambda}_1}(t)u_0 \right\|
\\[2mm]
& \leq
C_n(\alpha, t,u_0) |\boldsymbol{\lambda}_{\alpha} - \boldsymbol{\lambda}_1|_{n}
+
\left\|S_{\alpha, \boldsymbol{\lambda}_1}(t)u_0 - S_{1, \boldsymbol{\lambda}_1}(t)u_0 \right\| 
\\[2mm]
& \to 0 \quad \text{as } \alpha \to 1^{-},
\end{align*}
as a consequence of the convergences stated in \emph{Step 1} and \emph{Step 2}.

\smallskip

\noindent This concludes the proof of Theorem \ref{mainresult1}.

\section{Applications}\label{applications}


%
%
%
%
%
%
%
%
%
%
%

In this last Section, we shall display concrete applications of our abstract result to a family of initial boundary value problems. In particular, in Subsection \ref{bounded}
we deal with an inverse diffusion problem for a parabolic equation on a bounded set. In Subsection \ref{lame} we study the problem of the identification of Lam\'e coefficients in a plane elasticity problem (cf. \cite{ANS} and \cite{NU} for a similar problem in the steady case).

\subsection{Realization on bounded sets}\label{bounded}

Let $\Omega_{i} \subseteq \R^{d_{i}}$, $i=1,\cdots,n$, be
bounded domains with regular boundary
$\partial\Omega_{i}$, and set $\Omega = \Omega_{1} \times \cdots\times \Omega_{n}$. Denote by $\Delta_{\mathbf{x}_{i}}$ the
realizations of the Dirichlet Laplacian in the space of complex-valued square-summable functions $L^{2}(\Omega)$, and define $A_{i} = -\Delta_{\mathbf{x}_{i}}$ on the domain $D(A_{i})
      = \left\{u \in L^{2}(\Omega)\,:\,
         \Delta_{\mathbf{x}_{i}}u \in L^{2}(\Omega)\right\} $, with respect to the variables
$\mathbf{x}_{i} = (x_{1},\cdots,x_{d_{i}}) \in \Omega_{i}$. Thus the operators $A_{i}$ are self-adjoint and nonnegative, with discrete spectrum
$$
\left\{ \sigma_{i,k_{i}}\right\}_{k_{i} \in \N} \subset \mathbb{R}_{+}
$$
consisting in monotone increasing sequences of nonnegative numbers
$$
   0 \leq \sigma_{i,1} \leq  \cdots \leq \sigma_{i,k_{i}-1} \leq \sigma_{i,k_{i}} \to \infty
   \quad
   \text{as}
   \quad
   k_{i} \to \infty,
$$
to be associated to the smooth product eigenfunctions
$v_{k_{1}}(\mathbf{x}_{1}) \cdots v_{k_{n}}(\mathbf{x}_{n})$, respectively. For example, in the special case when
$$
d_{1} = \cdots = d_{n}=1
\e
\Omega_{1} = \cdots = \Omega_{n} = (0,\pi),
$$
as it is well known, one has
$$
\sigma_{k_{i}} = k_{i}^{2},
\e
v_{k_{1}}(x_{1}) \cdots v_{k_{n}}(x_{n}) = \frac{2^{n}}{\pi^{n}}\sin(k_{1}x_{1}) \cdots\sin(k_{n}x_{n}), 
$$
for
$(x_{1},\cdots, x_{n}) \in (0,\pi)^{n}$. 

Here, we stress that
$$
  D(A_{i}^{1/2})
= \left\{u \in L^{2}(\Omega)\,:\,
  \nabla_{\mathbf{x}_{i}}u \in L^{2}(\Omega)^{d_{i}},
  u(\cdot,\mathbf{x}_{i},\cdot) \equiv 0 \text{ on } \partial\Omega_{i}\right\},
$$
with
 $\|A_{i}^{1/2} u \|_{L^{2}(\Omega)}
= \|\nabla_{\mathbf{x}_{i}} u \|_{L^{2}(\Omega)^{d_{i}}}$,
having denoted by
$\nabla_{\mathbf{x}_{i}} = (\partial_{x_{1}},\cdots,\partial_{x_{d_{i}}})$
the gradient operators with respect to $\mathbf{x}_{i}$.
By the topological equality
$$
\partial (\Omega_{1}\times \cdots \times\Omega_{n})
=
(\partial \Omega_{1} \times \cdots \times\Omega_{n}) \cup \cdots \cup (\Omega_{1} \times \cdots \times \partial\Omega_{n})
$$
it is easy to check that
$$
\displaystyle \bigcap_{i = 1}^{n} D(A_{i}^{1/2})  = H_{0}^{1}(\Omega)
\e
D=\displaystyle \bigcap_{i = 1}^{n}D(A_{i})  = H^{2}(\Omega) \cap H_{0}^{1}(\Omega).
$$
Furthermore,
they are commuting (in the sense of distributions).

Following the functional setting introduced above,
the inverse problem we want to study reads as follows.

\smallskip

\noindent
{\bf Inverse Diffusion Problem.}\;
{\it Given $\alpha\in (0,1)$ and $T>0$, find the function
$u:\Omega \times [0,T] \to \mathbb{C}$ and the positive constants
$\lambda_{1} ,\cdots, \lambda_{n}$ that fulfill the following parabolic
initial boundary value problem:
\begin{equation}
\nonumber%
\begin{cases}
      \Dta u - \lambda_{1}\Delta_{\mathbf{x}_{1}} u - \cdots - \lambda_{n}\Delta_{\mathbf{x}_{n}} u = 0
\quad  &\text{in }\Omega \times (0,T),\\[3mm]
     u(\mathbf{x}_{1},\cdots,\mathbf{x}_{n},0) = u_0(\mathbf{x}_{1},\cdots,\mathbf{x}_{n}) &  \text{ in }\Omega,\\[3mm]
     u(\mathbf{x}_{1},\cdots,\mathbf{x}_{n},t) = 0 &\text{on } \partial\Omega \times (0,T),
\end{cases}%
\end{equation}
and the overdeterminating conditions
\begin{equation}\label{ac1}
\|\nabla_{\mathbf{x}_{1}}u(\cdot,\bar T)\|_{L^{2}(\Omega)^{d_{1}}}^{2}
   = \varphi_{1},\, \dots,\,
\|\nabla_{\mathbf{x}_{n}}u(\cdot,\bar T)\|_{L^{2}(\Omega)^{d_{n}}}^{2}
   = \varphi_{n},
\end{equation}
where $\bar T$, $u_{0}$ and $\boldsymbol{\varphi}=(\varphi_{1},\cdots,\varphi_{n})$ are given.}

\smallskip

Then, Theorem
\ref{mainresult} 
yields the following existence and uniqueness result.
\begin{theorem}
Fixed $\alpha\in (0,1)$ and $T>0$, let $u_{0} \in\mathcal{A}$ be admissible, and let $\boldsymbol{\varphi}=(\varphi_{1},\cdots,\varphi_{n})$ be compatible in the sense of Definition \ref{def compat}.
Then there exists a unique  solution $(u,\lambda_{1},\cdots,\lambda_{n})$,  to the Inverse Diffusion Problem, with
$$
\begin{cases}
u\in C([T_1,T_2], D)\cap C([0,T),L^2(\Omega)),\; \Dta u\in C([T_1,T_2], L^2(\Omega)),\; 0<T_1\leq T_2<T,
\\[2mm]
(\lambda_{1},\cdots,\lambda_{n}) \in \mathbb{R}_{+}^{n},
\end{cases}
$$
where $\boldsymbol{\varphi}$ satisfies \eqref{ac1} with $\bar T\in[T_1,T_2]$.
\end{theorem}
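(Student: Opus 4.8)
The plan is to recognize the Inverse Diffusion Problem as a concrete instance of Problem $\boldsymbol{P_\alpha}$ with $H = L^2(\Omega)$ and $A_i = -\Delta_{\mathbf{x}_i}$, and then to invoke Theorem \ref{mainresult} verbatim. To this end I would first verify that the operators $A_i$ meet all the structural requirements imposed in Section \ref{abstract}: closedness, self-adjointness, strict positivity, and mutual commutativity. Closedness and self-adjointness of each Dirichlet realization on $L^2(\Omega)$ are standard and already recorded above, and the identity $D = \bigcap_{i=1}^{n} D(A_i) = H^2(\Omega) \cap H_0^1(\Omega)$ fixes the common domain. Mutual commutativity follows because $A_i$ and $A_j$ differentiate in the disjoint groups of variables $\mathbf{x}_i$ and $\mathbf{x}_j$: on the smooth product eigenfunctions $v_{k_1}(\mathbf{x}_1)\cdots v_{k_n}(\mathbf{x}_n)$ the operators act diagonally and the mixed derivatives commute, so $\langle A_i x, A_j y\rangle = \langle A_j x, A_i y\rangle$ extends from these to all $x,y \in D$ by density.

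The one point that deserves genuine care is the strict positivity of each $A_i$ as an operator on the \emph{full} space $L^2(\Omega)$, since $A_i$ differentiates only in the variables $\mathbf{x}_i$ and leaves the remaining ones untouched; coercivity on $L^2(\Omega)$ is therefore not automatic. This is where I would invoke the Poincar\'e inequality on the slice $\Omega_i$ together with Fubini's theorem. For $u \in D(A_i^{1/2})$ the function vanishes on the face $\partial\Omega_i$ in the $\mathbf{x}_i$-variable, so for a.e.\ fixed value of the remaining variables the Dirichlet Poincar\'e inequality on $\Omega_i$, with first eigenvalue $\Lambda_i := \sigma_{i,1} > 0$, gives $\int_{\Omega_i} |\nabla_{\mathbf{x}_i} u|^2 \, \de\mathbf{x}_i \geq \Lambda_i \int_{\Omega_i} |u|^2 \, \de\mathbf{x}_i$; integrating over the frozen variables yields $\langle A_i u, u\rangle = \|\nabla_{\mathbf{x}_i} u\|_{L^2(\Omega)^{d_i}}^2 \geq \Lambda_i \|u\|_{L^2(\Omega)}^2$. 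Hence $A_i$ is strictly positive with coercivity constant $\Lambda_i$, and consequently so is $A(\boldsymbol{\lambda}) = \lambda_1 A_1 + \cdots + \lambda_n A_n$ for every $\boldsymbol{\lambda} \in \R_+^n$.

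Next I would identify the two sets of overdetermination data. Using the square-root identity $\langle A_i x, x\rangle = \|A_i^{1/2} x\|^2$ recorded in Section \ref{notazioni} together with $\|A_i^{1/2} u\|_{L^2(\Omega)} = \|\nabla_{\mathbf{x}_i} u\|_{L^2(\Omega)^{d_i}}$, the abstract conditions \eqref{ac} read $\langle A_i u(\bar T), u(\bar T)\rangle = \|\nabla_{\mathbf{x}_i} u(\cdot, \bar T)\|_{L^2(\Omega)^{d_i}}^2 = \varphi_i$, which are precisely the energy conditions \eqref{ac1}. Consequently the admissibility of $u_0$, namely the linear independence of $A_1 u_0, \dots, A_n u_0$ in $L^2(\Omega)$, and the compatibility of $\boldsymbol{\varphi}$ in the sense of Definition \ref{def compat} are the very same hypotheses in both formulations.

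With all hypotheses of Theorem \ref{mainresult} in place, the conclusion is immediate: the theorem produces a unique pair $(u,\boldsymbol{\lambda})$ with $u \in C([T_1,T_2];D) \cap C([0,T);L^2(\Omega))$, $\Dta u \in C([T_1,T_2];L^2(\Omega))$, and $\boldsymbol{\lambda} \in \R_+^n$, solving \eqref{eq} and satisfying the energy conditions at $\bar T \in [T_1,T_2]$. Translating back through $A_i = -\Delta_{\mathbf{x}_i}$ gives exactly the claimed solution of the Inverse Diffusion Problem with the stated regularity. I expect no obstacle beyond the coercivity check: every other ingredient is a direct dictionary translation, whereas the coercivity is the only place where the product structure of $\Omega$ and the partial action of each $A_i$ must be handled explicitly.
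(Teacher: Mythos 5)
Your proposal is correct and follows the same route as the paper, which likewise obtains this result as a direct specialization of Theorem \ref{mainresult} to $H=L^2(\Omega)$, $A_i=-\Delta_{\mathbf{x}_i}$, using $\langle A_iu,u\rangle=\|\nabla_{\mathbf{x}_i}u\|^2_{L^2(\Omega)^{d_i}}$ to match \eqref{ac} with \eqref{ac1}. Your explicit Poincar\'e--Fubini verification of the strict positivity of each $A_i$ on the full product space is a detail the paper leaves implicit (it only records nonnegativity and the discrete spectrum), and it is a correct and worthwhile addition since the abstract framework does require coercivity.
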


\subsection{Identification of Lam\'e coefficients in a plane elasticity problem}\label{lame}
Let us consider a bounded domain $\Omega \subset \R^{2}$ with smooth boundary $\partial\Omega$, which is occupied by an elastic homogeneous plate. We denote by $\boldsymbol{u} = \left(u_{1},u_{2}\right)^{\mathtt{T}}$ the displacement vector, by $\nabla = \left( \partial_{x},\partial_{y} \right)^{\mathtt{T}}$ the transposed gradient operator, and by $\Delta = \left(\partial_{x}^{2} + \partial_{y}^{2},\partial_{x}^{2} + \partial_{y}^{2}\right)^{\mathtt{T}}$ the 2-dimensional Laplace operator. Accordingly, we set the vector notations
$$
\boldsymbol{L}^{2}(\Omega) = L^{2}(\Omega) \times L^{2}(\Omega),
\,
\boldsymbol{H}^{1}(\Omega) = H^{1}(\Omega) \times H^{1}(\Omega)
\text{ and }
\boldsymbol{H}^{2}(\Omega) = H^{2}(\Omega) \times H^{2}(\Omega).
$$
We shall suppose that the plate is \emph{clamped} on $\Gamma_{D} \subset \partial \Omega$, i.e.,
$$
\boldsymbol{u} \equiv \boldsymbol{0}
\quad
\text{on}
\quad
 \Gamma_{D}
\quad
\text{\it (Dirichlet homogeneous conditions)}.
$$
and that it is \emph{stress-free} on $\Gamma_{N} \subset \partial \Omega$, i.e.,
$$
\partial_{\nu}\boldsymbol{u} \equiv \boldsymbol{0}
\quad
\text{on}
\quad
\Gamma_{N}
\quad
\text{\it (Neumann homogeneous conditions)},
$$
where $\partial_{\nu}$ is the outer normal derivative on $\Gamma_{N}$. Here we assume $\partial \Omega = \Gamma_{D} \cup \Gamma_{N}$ and $\Gamma_{D} \cap \Gamma_{N} = \emptyset$.

Following Hooke's law, the planar stress tensor is given by
$$
\boldsymbol{\tau}_{\lambda,\mu}({\boldsymbol{u}}) = \lambda \mathrm{Tr}\left( \boldsymbol{\varepsilon}(\boldsymbol{u}) \right)I + 2\mu \boldsymbol{\varepsilon}(\boldsymbol{u}),
$$
where we use the standard notations
$$
\boldsymbol{\varepsilon}(\boldsymbol{u})
=
\frac{1}{2}\left( \nabla \boldsymbol{u} + \nabla \boldsymbol{u}^{\mathtt{T}}\right)
=
\frac{1}{2}
\begin{bmatrix}
2\partial_{x}u_{1} &  \partial_{y}u_{1}  +   \partial_{x}u_{2}
\\[2mm]
\partial_{y}u_{1}  +   \partial_{x}u_{2} &  2\partial_{y}u_{2}
\end{bmatrix}
\quad
(\text{\it strain tensor})
$$
and
$$
\mathrm{Tr}\left( \boldsymbol{\varepsilon}(\boldsymbol{u}) \right)I
=
\begin{bmatrix}
\partial_{x}u_{1} +  \partial_{y}u_{2} & 0
\\[2mm]
0 & \partial_{x}u_{1} +  \partial_{y}u_{2}
\end{bmatrix}
\quad
(\text{\it trace tensor}).
$$
The parameters $\lambda$ and $\mu$ are the so-called {\it Lam\'e coefficients}, which, from now on, are to be considered independent on time and space. Since
$$
\nabla \cdot
\mathrm{Tr}\left( \boldsymbol{\varepsilon}(\boldsymbol{u}) \right)I
=
\begin{bmatrix}
\partial_{x}^{2}u_{1} +  \partial_{x}\partial_{y}u_{2}
\\[2mm]
\partial_{x}\partial_{y} u_{1} +  \partial_{y}^{2}u_{2}
\end{bmatrix}
\e
2\nabla \cdot \boldsymbol{\varepsilon}(\boldsymbol{u}) =
\begin{bmatrix}
2\partial_{x}^{2}u_{1} + \partial_{y}^{2}u_{1}  +   \partial_{x}\partial_{y}u_{2}
\\[2mm]
\partial_{x}\partial_{y}u_{1}  +   \partial_{x}^{2}u_{2} + 2\partial_{y}^{2}u_{2}
\end{bmatrix},
$$
then the second-order operator $-\nabla \cdot \boldsymbol{\tau}_{\lambda,\mu}({\boldsymbol{u}})$ can be split as
\begin{equation}\label{decomposition}
-\nabla \cdot \boldsymbol{\tau}_{\lambda,\mu}({\boldsymbol{u}}) = (\lambda + \mu) \boldsymbol{A}\boldsymbol{u} + \mu \boldsymbol{B}\boldsymbol{u}
\end{equation}
where we define
$$
\boldsymbol{A}\boldsymbol{u} =
-
\nabla\left( \nabla \cdot \boldsymbol{u} \right)
=
-
\begin{bmatrix}
\partial_{x}^{2}u_{1} +  \partial_{x}\partial_{y}u_{2}
\\[2mm]
\partial_{x}\partial_{y} u_{1} +  \partial_{y}^{2}u_{2}
\end{bmatrix}
$$
on the domain
$$
D(\boldsymbol{A}) = \left\{ \boldsymbol{u} \in \boldsymbol{L}^{2}(\Omega); \boldsymbol{A}\boldsymbol{u} \in \boldsymbol{L}^{2}(\Omega) \text{ and } \boldsymbol{u} \equiv \boldsymbol{0} \text{ on }\Gamma_{D}\right\}
$$
and
$$
\boldsymbol{B}\boldsymbol{u} = - \Delta \boldsymbol{u}
=
-
\begin{bmatrix}
\partial_{x}^{2}u_{1} +  \partial_{y}^{2}u_{1}
\\[2mm]
\partial_{x}^{2}u_{2} +  \partial_{y}^{2}u_{2}
\end{bmatrix}
$$
on the domain
$$
D(\boldsymbol{B}) = \left\{ \boldsymbol{u} \in \boldsymbol{L}^{2}(\Omega); \boldsymbol{B}\boldsymbol{u} \in \boldsymbol{L}^{2}(\Omega) \text{ and } \boldsymbol{u} \equiv \boldsymbol{0} \text{ on }\Gamma_{D} \right\}.
$$
By straightforward computations, it is not difficult to see that both operators $\boldsymbol{A}$ and $\boldsymbol{B}$ are nonnegative and self-adjoint, and mutually commute. Then, following the general theory, we can define $\boldsymbol{A}^{1/2}$ and $\boldsymbol{B}^{1/2}$ on the respective domains $D(\boldsymbol{A}^{1/2})$ and $D(\boldsymbol{B}^{1/2})$, with
$$
 \int_{\Omega}|\boldsymbol{A}^{1/2}\boldsymbol{u}|^{2} \, dxdy
=
\int_{\Omega} \boldsymbol{A}\boldsymbol{u}\cdot \overline{\boldsymbol{u}} \,\de x\,\de y
=
\int_{\Omega} |\nabla \cdot \boldsymbol{u}|^{2}\,\de x\,\de y
$$
and
$$
\int_{\Omega}|\boldsymbol{B}^{1/2}\boldsymbol{u}|^{2} \, \de x\,\de y
=
\int_{\Omega} \boldsymbol{B}\boldsymbol{u}\cdot \overline{\boldsymbol{u}} \,\de x\,\de y
=
\int_{\Omega} |\nabla \boldsymbol{u}|^{2}\,\de x\,\de y.
$$
Here, since $D(\boldsymbol{B}^{1/2}) \subseteq D(\boldsymbol{A}^{1/2})$ and $D(\boldsymbol{B}) \subseteq D(\boldsymbol{A})$, there holds
$$
D(\boldsymbol{B}^{1/2}) \cap D(\boldsymbol{A}^{1/2}) = D(\boldsymbol{B}^{1/2}) = \boldsymbol{H}_{D}^{1}(\Omega)
= \left\{ \boldsymbol{u} \in \boldsymbol{H}^{1}(\Omega); \boldsymbol{u} \equiv \boldsymbol{0} \text{ on }\Gamma_{D} \right\}
$$
and
$$
D(\boldsymbol{B}) \cap D(\boldsymbol{A}) = D(\boldsymbol{B}) = \boldsymbol{H}^{2}(\Omega) \cap \boldsymbol{H}_{D}^{1}(\Omega).
$$

Then, we can state our inverse problem consisting in the identification of the Lam\'e coefficients, along with the displacement function $\boldsymbol{u}$, as in the following.

\smallskip


\noindent
{\bf Inverse Lam\'e Problem.}\;
{\it Given $\alpha\in (0,1)$ and $T>0$, find the function
$\boldsymbol{u}:\Omega \times [0,T] \to \mathbb{C}^{2}$ and the constants
$\lambda$ and $\mu$ that fulfill the parabolic
initial boundary value problem:
\begin{equation}
\nonumber
\begin{cases}
      \Dta\boldsymbol{u} -\nabla \cdot \boldsymbol{\tau}_{\lambda,\mu}({\boldsymbol{u}}) = \boldsymbol{0}
  &\text{in }\Omega \times (0,T),
\\[3mm]
     \boldsymbol{u}(x,y,0) = \boldsymbol{u}_0(x,y) &\text{in }\Omega,
\\[3mm]
     \boldsymbol{u}(x,y,t) = \boldsymbol{0}
  &\text{on }\Gamma_{D} \times (0,T),
\\[3mm]
\partial_{\nu}\boldsymbol{u}(x,y,t) = \boldsymbol{0}
&\text{on } \Gamma_{N} \times (0,T),
\end{cases}%
\end{equation}
and the overdeterminating conditions
\begin{equation}\label{ac2}
\int_{\Omega} |\nabla \cdot \boldsymbol{u}(x,y,\bar T)|^{2}\,\de x\,\de y
 = \varphi \e
\int_{\Omega} |\nabla \boldsymbol{u}(x,y,\bar T)|^{2}\,\de x\,\de y  = \psi,
\end{equation}
where $\bar T$, $\boldsymbol{u}_{0}$, $\varphi$ and $\psi$ are given.}

\smallskip

In order to deduce our existence and uniqueness result, we now apply Theorem \ref{mainresult}
where, 
according to the decomposition \eqref{decomposition}, we set the positive unknown constants as
$$
\lambda_1 := \lambda + \mu
\e
\lambda_2 := \mu.
$$
Notice that the condition $\lambda_1 > 0$ is actually the strong convexity condition in dimension two (cf. \cite{ANS,NU}). With these choices, we get the following result.
\begin{theorem}
Given $\alpha\in (0,1)$ and $T>0$, let $\boldsymbol{u}_{0} \in \mathcal{A}$ be admissible, and let $(\varphi,\psi)$ be compatible in the sense of Definition \ref{def compat}.
Then there exists a unique  solution $(\boldsymbol{u},\lambda,\mu)$  to the Inverse Lam\'e Problem $(\boldsymbol{u},\lambda,\mu)$, with
$$
\begin{cases}
\boldsymbol{u} \in 
C([0,T);\boldsymbol{L}^{2}(\Omega)) \cap C([T_1,T_2];D(\boldsymbol{B})),\; \Dta\boldsymbol{u}\in C([T_1,T_2], \boldsymbol{L}^2(\Omega)),\; 0<T_1\leq T_2<T,
\\[2mm]
\lambda \in \R, \quad \mu \in \R_{+} 
\e
\lambda + \mu >0,
\end{cases}
$$
where $\boldsymbol{\varphi}$ satisfies \eqref{ac2} with $\bar T\in[T_1,T_2]$.
\end{theorem}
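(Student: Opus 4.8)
The plan is to read the Inverse Lamé Problem as a concrete realization of the abstract Problem $\boldsymbol{P_\alpha}$ with $n=2$, and then to invoke Theorem \ref{mainresult}. First I would set $H = \boldsymbol{L}^2(\Omega)$, $A_1 = \boldsymbol{A}$, $A_2 = \boldsymbol{B}$, and use the identifications $\lambda_1 = \lambda + \mu$, $\lambda_2 = \mu$, so that by the decomposition \eqref{decomposition} the elliptic part $-\nabla \cdot \boldsymbol{\tau}_{\lambda,\mu}(\boldsymbol{u})$ coincides with $A(\boldsymbol{\lambda}) = \lambda_1 \boldsymbol{A} + \lambda_2 \boldsymbol{B}$. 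The self-adjointness, nonnegativity and mutual commutativity of $\boldsymbol{A}$ and $\boldsymbol{B}$ have already been verified above, while the homogeneous mixed boundary conditions (Dirichlet on $\Gamma_D$, stress-free on $\Gamma_N$) are encoded in the domains $D(\boldsymbol{A})$, $D(\boldsymbol{B})$ and in the associated self-adjoint realizations, the stress-free condition arising as the natural boundary condition of the combined operator. Using the identities $\|\boldsymbol{A}^{1/2}\boldsymbol{u}\|^2 = \int_\Omega |\nabla \cdot \boldsymbol{u}|^2\,\de x\,\de y$ and $\|\boldsymbol{B}^{1/2}\boldsymbol{u}\|^2 = \int_\Omega |\nabla \boldsymbol{u}|^2\,\de x\,\de y$, the overdetermining conditions \eqref{ac2} become exactly $\langle \boldsymbol{A}\boldsymbol{u}(\bar T), \boldsymbol{u}(\bar T)\rangle = \varphi$ and $\langle \boldsymbol{B}\boldsymbol{u}(\bar T), \boldsymbol{u}(\bar T)\rangle = \psi$, matching the abstract measurements \eqref{ac}.

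The main obstacle is that the operator $\boldsymbol{A} = -\nabla(\nabla \cdot \boldsymbol{u})$ is merely nonnegative, its quadratic form vanishing on divergence-free fields, whereas Theorem \ref{mainresult} is stated under the hypothesis that each $A_i$ be strictly positive. I would resolve this by observing that the abstract uniqueness and existence arguments never invoke the coercivity of the individual operators $A_i$: what is actually used is that the combined operator $A(\boldsymbol{\lambda})$ be strictly positive, so that Theorem \ref{formula} applies and $-A(\boldsymbol{\lambda})$ generates an injective analytic semigroup, together with the commutativity of $\boldsymbol{A}$ and $\boldsymbol{B}$ with that semigroup. Now for every admissible parameter $\boldsymbol{\lambda} \in \R_+^2$ one has $\mu = \lambda_2 > 0$, and since $\boldsymbol{B} = -\Delta$ with homogeneous Dirichlet data on $\Gamma_D$ is strictly positive by Poincaré's inequality, the estimate $\langle A(\boldsymbol{\lambda})\boldsymbol{u}, \boldsymbol{u}\rangle \geq \lambda_2 \langle \boldsymbol{B}\boldsymbol{u}, \boldsymbol{u}\rangle \geq \lambda_2 \Lambda_{\boldsymbol{B}} \|\boldsymbol{u}\|^2$ yields strict positivity of $A(\boldsymbol{\lambda})$ with coercivity constant $\lambda_2 \Lambda_{\boldsymbol{B}}$. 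Consequently Lemma \ref{transport} and Proposition \ref{properties} remain valid verbatim on the convex parameter set $\R_+^2$, the only ingredients being injectivity of the semigroup and commutativity.

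With these reductions the conclusion follows the same two-part scheme as Theorem \ref{mainresult}. For uniqueness, admissibility of $\boldsymbol{u}_0$ guarantees that $\{\boldsymbol{A}\boldsymbol{u}_0, \boldsymbol{B}\boldsymbol{u}_0\}$ are linearly independent; hence by Lemma \ref{transport} the Gram matrix $\boldsymbol{\mathcal{M}}_\alpha(\boldsymbol{\lambda})$ is positive defined (Proposition \ref{Gram}), the Jacobian $\boldsymbol{\F}_\alpha'(\boldsymbol{\lambda})$ is negative defined (Proposition \ref{properties}), and Theorem \ref{injectivity1} renders $\boldsymbol{\F}_\alpha$ injective on $\R_+^2$, so the pair $(\lambda_1, \lambda_2)$ solving \eqref{system} is unique. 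For existence, the compatibility hypothesis $(\varphi, \psi) \in \mathrm{Im}\,\boldsymbol{\F}_\alpha$ furnishes the (unique) preimage $(\lambda_1, \lambda_2)$, and the corresponding $\boldsymbol{u}$ is recovered through the representation formula of Theorem \ref{formula}, which also delivers the stated regularity $\boldsymbol{u} \in C([0,T);\boldsymbol{L}^2(\Omega)) \cap C([T_1,T_2];D(\boldsymbol{B}))$ with $\partial_t^\alpha \boldsymbol{u} \in C([T_1,T_2];\boldsymbol{L}^2(\Omega))$; here the abstract domain $D = D(\boldsymbol{A}) \cap D(\boldsymbol{B})$ reduces to $D(\boldsymbol{B}) = \boldsymbol{H}^2(\Omega) \cap \boldsymbol{H}_D^1(\Omega)$, since $D(\boldsymbol{B}) \subseteq D(\boldsymbol{A})$. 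Finally, inverting the linear change of variables gives $\mu = \lambda_2 > 0$, $\lambda = \lambda_1 - \lambda_2 \in \R$ and $\lambda + \mu = \lambda_1 > 0$, which is precisely the strong-convexity constraint. As a last routine point, the complex space $\boldsymbol{L}^2(\Omega)$ is handled by the standard realification (equivalently, by restriction to real data), under which all energy functionals and the map $\boldsymbol{\F}_\alpha$ remain real-valued, so that the real-Hilbert-space theory of the previous sections transfers without change.
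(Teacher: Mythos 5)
Your proof is correct and follows essentially the same route as the paper, which simply specializes Theorem \ref{mainresult} to $H=\boldsymbol{L}^{2}(\Omega)$, $A_1=\boldsymbol{A}$, $A_2=\boldsymbol{B}$ with the change of unknowns $\lambda_1=\lambda+\mu$, $\lambda_2=\mu$ and the identification of \eqref{ac2} with \eqref{ac} via the square-root identities. Your additional observation --- that $\boldsymbol{A}=-\nabla(\nabla\cdot\,)$ is only nonnegative rather than strictly positive as the abstract hypotheses literally require, and that the argument nonetheless goes through because only the coercivity of the combined operator $A(\boldsymbol{\lambda})$ (supplied by $\lambda_2\boldsymbol{B}$ and Poincar\'e's inequality) is ever used --- is a point the paper passes over in silence, and it is a correct and worthwhile repair of that hypothesis mismatch.
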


\begin{remark} 
We stress that the physical meaning of the Inverse Lam\'e Problem, as it is here
stated, represents the in-plane motion of an elastic flat plate of
negligible mass where the inertial term $\partial_{t}^{2} \boldsymbol{u}$ in the dynamic equilibrium equation has been neglected, as the parabolic abstract theory we developed cannot manage directly a second-order equation in time (cf. \cite[Remark 3]{ Mola1}).
\end{remark}

\section*{Acknowledgments}\small
\noindent S. C., M.R. L., G. M. and S. R. have been supported by the Gruppo Nazionale per l'Analisi Matematica, la Probabilit\`a e le loro Applicazioni (GNAMPA) of the Istituto Nazionale di Alta Matematica (INdAM). A. M. has been supported by the Gruppo Nazionale per il Calcolo Scientifico (GNCS) of the Istituto Nazionale di Alta Matematica (INdAM).\\
S. C. and M.R. L. also thank MUR for the support under the project PRIN 2022 -- 2022XZSAFN: \lq\lq Anomalous Phenomena on Regular and Irregular Domains: Approximating Complexity for the Applied Sciences" -- CUP B53D23009540006 (see the website\\ \href{https://www.sbai.uniroma1.it/~mirko.dovidio/prinSite/index.html}{https://www.sbai.uniroma1.it/~mirko.dovidio/prinSite/index.html}).\\

\bigskip

\appendix

\section{Numerical simulations: additional discussion}\label{numerical}

In the present section, we provide more specific comments to the plots presented in Fig.~\ref{fig::all_results} (see Section 5). In all plots produced it is observed that, given their generally low dissipation, the simulations carried out with $\alpha=1$ appear to be less affected by changes in the diffusion parameters. Factor 10 variations in the input parameters values result in fact in much smaller variation of the output parameter values. On the other hand, the simulations featuring $\alpha=0.75$ and even more so the ones with $\alpha=0.25$ show substantial variations of solution derivative norms associated to variations of the input parameters. Despite the fact that absolute output parameters variations appear wider for the intermediate $\alpha=0.75$ simulations, the relative output swings are considerably higher in the $\alpha=0.25$ test cases. In fact, for such a value of $\alpha$, the output parameters show variations of a factor 10 as the diffusion parameter values change. 

\begin{figure}[!h]
	\includegraphics[width=11cm]{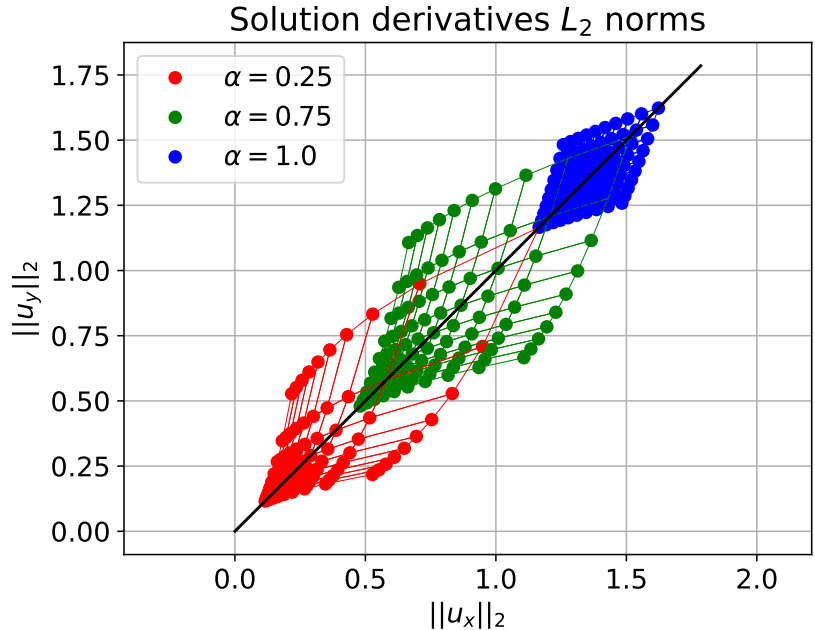}
	\caption{Solution derivatives norms $||u_x||_2$ vs $||u_y||_2$ obtained using the diffusion parameters in Fig~\ref{fig::diff_params} in the problem with L-shaped domain and initial condition featuring a single symmetric $C^\infty$ cutoff function. \label{fig::output_L_symm}}
\end{figure}

\begin{figure}[!h]
	\includegraphics[width=11cm]{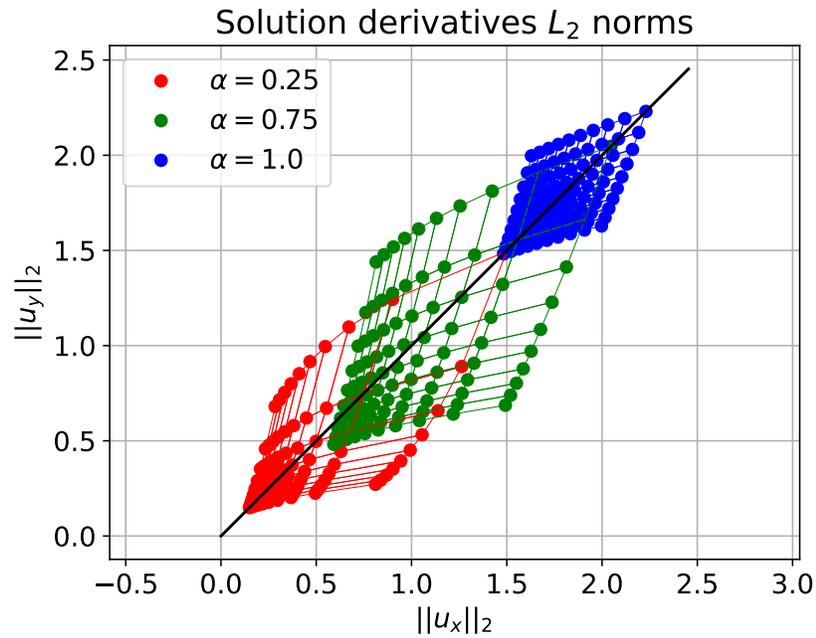}
	\caption{Solution derivatives norms $||u_x||_2$ vs $||u_y||_2$ obtained using the diffusion parameters in Fig~\ref{fig::diff_params} in the problem with a L-shaped domain and initial condition featuring two asymmetrically centered $C^\infty$ cutoff functions. \label{fig::output_L_asymm}}
\end{figure}

Fig. \ref{fig::output_L_symm} refers to the map $\boldsymbol{\F}_{\alpha}$ obtained through the numerical simulations carried out using the L-shaped domain $\Omega_1$ and the initial condition $\phi_1^{\Omega_1}$ characterized by a single $C^\infty$ cutoff function placed in a symmetric location. As can be appreciated in the image, in such a symmetric configuration all the simulations in which $\lambda_1=\lambda_2$ resulted in equal values of the output $||u_x||_2= ||u_y||_2$, and, in general, the plot appears symmetric with respect to the first quadrant angle bisector $y=x$. In particular, in all the simulations where $\lambda_1=\lambda_2$ we have $||u_x||_2= ||u_y||_2$. It must be remarked that the observed symmetry is a confirmation of validity of the resolution algorithm employed, as the numerical error is not able to spoil the symmetric behavior expected in the present test case.

\begin{figure}[!h]
	\includegraphics[width=11cm]{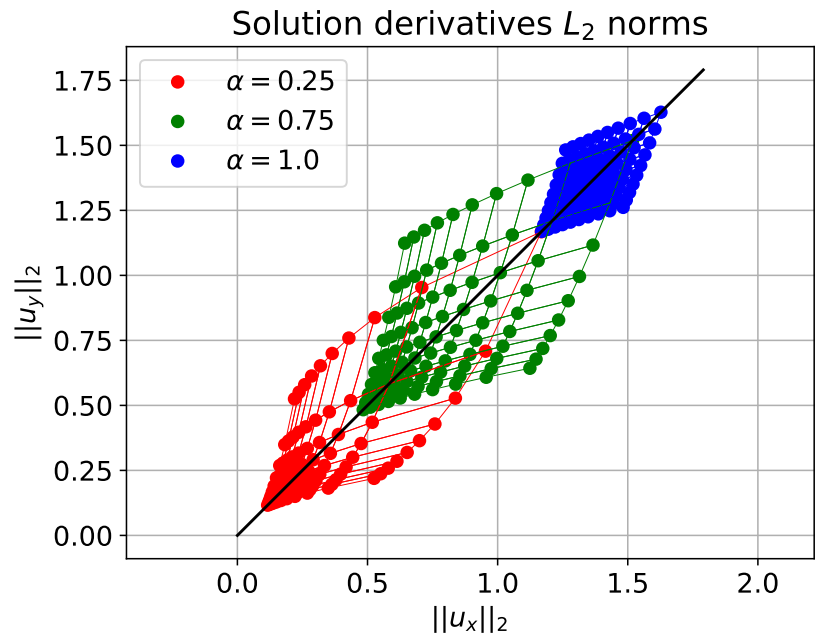}
	\caption{Solution derivatives norms $||u_x||_2$ vs $||u_y||_2$ obtained using the diffusion parameters in Fig~\ref{fig::diff_params} in the problem with a circular domain with eccentric hole and initial condition featuring a single symmetric $C^\infty$ cutoff function. \label{fig::output_eccentric_symm}}
\end{figure}

\begin{figure}[!h]
	\includegraphics[width=11cm]{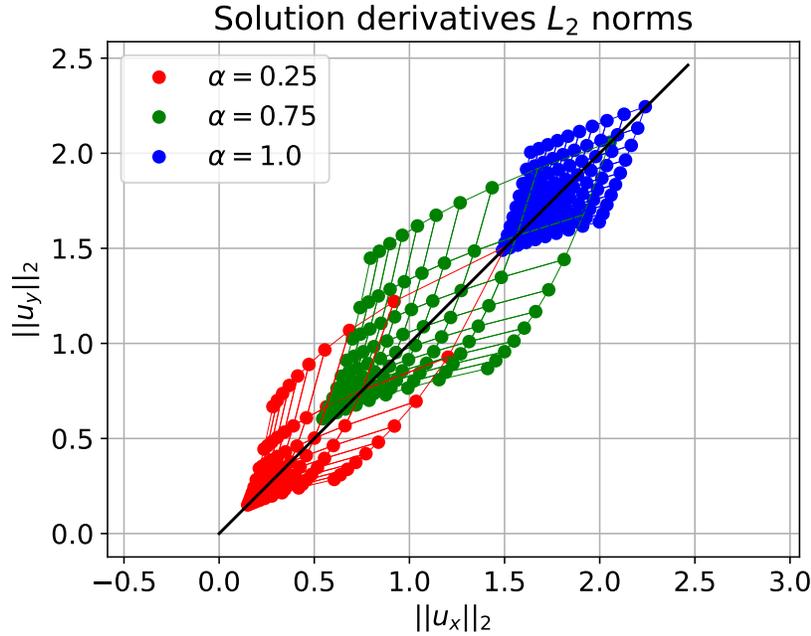}
	\caption{Solution derivatives norms $||u_x||_2$ vs $||u_y||_2$ obtained using the diffusion parameters in Fig~\ref{fig::diff_params} in the problem with a circular domain with eccentric hole and initial condition featuring two asymmetrically centered $C^\infty$ cutoff functions. \label{fig::output_eccentric_asymm}}
\end{figure}

Fig. \ref{fig::output_L_asymm} depicts the numerical approximation of map $\boldsymbol{\F}_{\alpha}$ obtained with the L-shaped domain $\Omega_1$ and initial condition $\phi_2^{\Omega_1}$ characterized by the sum of two $C^\infty$ cutoff functions placed in non symmetric fashion. As expected, here the output parameters plot does not appear symmetric with respect to the first quadrant angle bisector $y=x$. This is particularly clear at the corners of the $\alpha=0.25$ and $\alpha=0.75$ grids which are located farthest from the first quadrant angle bisector, and in the left bottom corner of the green grid associated with $\alpha=0.75$. A similar asymmetry is also observed for the red grid corresponding to $\alpha=0.25$, but given the smaller output parameter values associated to such plots, the asymmetric behavior is not fully readable with the scale considered. 

Fig. \ref{fig::output_eccentric_symm} presents an illustration of map $\boldsymbol{\F}_{\alpha}$ obtained using the circular domain with eccentric hole $\Omega_2$ and the initial condition $\phi_1^{\Omega_2}$ characterized by a single $C^\infty$ cutoff function placed in a symmetric position. Also in this case, the results diagram appears symmetric, as is to be expected given the symmetric configuration of domain and initial conditions. With this domain the less dissipative simulations remain the ones associated with $\alpha=1$, while the solution derivatives norms present progressively decreasing values when $\alpha=0.75$ $\alpha=0.25$, due to the increased dissipation. This is also confirmed by the results obtained using  the circular domain with eccentric hole $\Omega_2$ and the initial condition $\phi_2^{\Omega_2}$ characterized by the sum of two $C^\infty$ cutoff functions placed in non symmetric positions. As expected, the corresponding plots, presented in Fig.~\ref{fig::output_eccentric_asymm}, lose symmetry with respect to the first quadrant angle bisector. Once again, this is, in particular, visible at the bottom left corners of each output grid, corresponding to the most dissipative simulations carried out. And, as was the case with the L-shaped domain, also with the present circular domain with eccentric hole, the most evident departures from a symmetric behavior are observed for the $\alpha = 0.75$ simulations, for which the entire lower end of the output plot is placed at a reasonable distance from the first quadrant angle bisector. 

\bigskip

\end{document}